\let\svthefootnote\thefootnote
\newcommand\freefootnote[1]{%
  \let\thefootnote\relax%
  \footnotetext{#1}%
  \let\thefootnote\svthefootnote%
}
\newcommand{\mute}[1] {}
\newtheorem{theorem}[equation]{Theorem}
\newtheorem{lemma}[equation]{Lemma}
\newtheorem{proposition}[equation]{Proposition}
\newtheorem{noTitle}[equation]{}
\theoremstyle{definition}
\newtheorem{definition}[equation]{Definition}
\newtheorem{remark}[equation]{Remark}
\newtheorem*{assertion*}{Assertion}
\newtheorem{Setting}[equation]{Setting}
\newtheorem{Notation}[equation]{Notation}
\numberwithin{equation}{section}
\newcommand{\C}{\mathbb C}
\newcommand{\Z}{\mathbb Z}
\newcommand{\Q}{\mathbb Q}
\newcommand{\R}{\mathbb R}
\newcommand{\PP}{\mathbb P}
\newcommand{\CP}{\mathbb{C}P}
\DeclareMathOperator{\Lie}{Lie}
\DeclareMathOperator{\diag}{diag}
\DeclareMathOperator{\Morse}{Morse}
\DeclareMathOperator{\iso}{iso}
\DeclareMathOperator{\pt}{pt}
\DeclareMathOperator{\sph}{sph}
\DeclareMathOperator{\secc}{sec}
\DeclareMathOperator{\gen}{gen}
\newcommand{\acts}{\circlearrowleft}
\newcommand{\eps}{\varepsilon}
\begin{document}

\title[Equivariant cohomological rigidity of semi-free Hamiltonian circle actions]{On the equivariant cohomological rigidity of semi-free Hamiltonian circle actions}

\author{Liat Kessler}
\address{L.\ Kessler\\ Department of Mathematics, Physics, and Computer Science, University of Haifa,
at Oranim, Tivon 36006, Israel}
\email{lkessler@math.haifa.ac.il}

\author{Nikolas Wardenski}
\address{N.\ Wardenski\\ Department of Mathematics, University of Haifa, Haifa 3498838, Israel}
\email{wardenski.math@gmail.com}

\begin{abstract}
We consider semi-free Hamiltonian $S^1$-manifolds of dimension six and establish when the equivariant cohomology and data on the fixed point set determine the isomorphism type. Gonzales listed conditions under which the isomorphism type of such spaces is 
determined by fixed point data. We pointed out in an earlier paper that this result as stated is erroneous, and proved a corrected version. However, that version relied on a certain distribution of fixed points that is not at all necessary.
In this paper, we replace the latter assumption with a
global assumption on equivariant cohomology that is necessary for an isomorphism. 
We also extend our result to the equivariant (non-symplectic) topological category.
The variation in the earlier paper was tailored to suit the requirements of Cho's application of Gonzales' statement to classify semi-free monotone, Hamiltonian $S^1$-manifolds of dimension six. In the current paper, we aim to give the definitive statement relating fixed point data and equivariant cohomology to the isomorphism type of a semi-free Hamiltonian $S^1$-manifold.
\end{abstract}
\subjclass[2010]{53D35 (55N91, 53D20, 58D19)}
\keywords{Hamiltonian circle actions, symplectic geometry, semi-free circle actions, equivariant cohomology, local-to-global}

\maketitle

\setcounter{secnumdepth}{1}
\setcounter{tocdepth}{1}
\tableofcontents
\section{Introduction}\label{sec:intro}

An effective action of a torus $T=(S^1)^d$ on a symplectic manifold $(M,\omega)$ is \textbf{Hamiltonian} if it admits a \textbf{momentum map}: $\mu\colon M\to \R^d \cong (\Lie(S^1)^d)^{*}$ with $$d\mu(\cdot)(\xi)=-\omega(\bar{\xi},\cdot)$$ for every $\xi\in \Lie(S^1)^d$, where $\bar{\xi}$ is the fundamental vector field of the action corresponding to $\xi$. An \textbf{isomorphism} between Hamiltonian $T$-manifolds is an equivariant symplectomorphism that intertwines the momentum maps.\\

It follows from the definition that the set of critical points of $\mu$ coincides with the fixed point set $M^{T}$.
A driving question in Hamiltonian geometry is how much information on the action 
is encoded by data on the fixed points. 
Assume that the manifolds are compact and connected.
Delzant \cite{De88} and Karshon \cite{Ka99} showed that when either the symmetry is maximal, i.e., the \emph{complexity} ${\dim M}/{2}-\dim T$ is zero (Delzant), or the manifold is of dimension four and the Hamiltonian action is of $S^1$ (Karshon), the isomorphism type is determined by a combinatorial invariant that is determined by data on the fixed point set. In complexity zero, this is the image of the momentum map, which is the convex hull of the images of the fixed points; when $\dim M=4$ and $T=S^1$, this is the decorated graph associated with the action.

Another natural question is: To what extent does the equivariant cohomology (which is an algebraic invariant) determine the equivariant diffeotype? 
For example, compact toric manifolds (smooth toric varieties) are {\bf equivariantly cohomologically rigid}: two compact toric manifolds
whose equivariant cohomology rings are isomorphic as algebras are equivariantly diffeomorphic \cite[Theorem~4.1]{masuda} and \cite[Remark~2.5(1)]{Ma20}. 
This holds, in particular, for compact Hamiltonian $T$-manifolds of complexity zero. By \cite{HKT25}, equivariant cohomological rigidity holds for compact four-dimensional Hamiltonian $S^1$-manifolds as well.\\

In this paper, we address both questions for semi-free Hamiltonian $S^1$-manifolds of dimension six.
Recall that an $S^1$-action is \textit{semi-free} if the stabilizers of the action are connected. 
This is the first case to consider when we relax the assumption on the complexity of the torus action or on the dimension of the manifold. In the semi-free Hamiltonian setting, the orbit space $\mu^{-1}(t)/S^1$ at a regular value $t$ of the momentum map is a manifold. By \cite{MW74}, the restriction of $\omega$ to $\mu^{-1}(t)$ descends to a symplectic form $\omega_t$ in $M_t:=\mu^{-1}(t)/S^1$, making $(M_t,\omega_t)$ a symplectic manifold, natural in the sense that any isomorphism $f\colon M\to M$ naturally induces a symplectomorphism $f_t\colon M_t\to M_t$.\\

Gonzales \cite{Go11} claimed in \cite[Theorems  1.5 and 1.6]{Go11} that any two compact, connected semi-free Hamiltonian $S^1$-manifolds $M^1$ and $M^2$ are isomorphic, that is, equivariantly symplectomorphic, provided that they have the same \textit{(small) fixed point data} (as defined in  \cite[Definitions 1.2 and 1.3]{Go11}) and that a certain rigidity assumption on the reduced spaces holds. 
Gonzales' theorem was key in Cho's proof of \cite[Theorem 1.1]{Ch21.2} (which was also carried out in \cite{Ch19}, \cite{Ch21.1}), stating that any semi-free, monotone Hamiltonian $S^1$-manifold is equivariantly symplectomorphic to a Fano variety with a holomorphic $S^1$-action.\\
However, as we pointed out in \cite[Lemma 2.3, Lemma 2.4]{KW25},
more assumptions are needed to conclude that $M^1$ and $M^2$ are isomorphic. 
In \cite[Theorem 1.10]{KW25} we proved a variation of \cite[Theorem 1.6]{Go11}, in which we further assume that the reduced spaces of dimension four of $M^1$ and $M^2$ are symplectic rational surfaces, and require that non-extremal fixed surfaces, if exist, only occur at one level of the momentum maps. We showed in \cite[Theorem 1.14]{KW25} that this variation is enough to for the application of Cho.\\

The counterexample to \cite[Theorems 1.5 and 1.6]{Go11} that we gave in \cite[Example 2.1]{KW25} shows that it is, in general, not possible to extend an isomorphism $f$ between Hamiltonian $S^1$-manifolds that is defined below a non-extremal critical level $\lambda$ to above $\lambda$. 
In that example, the only fixed point components at 
$\lambda$ were fixed surfaces $F_1\subset M^1, F_2\subset M^2$.
An extension of $f$ over $\lambda$ necessarily sends the fundamental class $[F_1]\in H_2(M^1_{\lambda})$ to the fundamental class $[F_2]\in H_2(M^2_{\lambda})$, and this turned out to be incompatible with the action of $f$ on the homology of a reduced space below level $\lambda$.\\

In this article, we establish when the existence of an isomorphism between the equivariant cohomologies of the manifolds that is consistent with an isomorphism between the fixed point sets is sufficient for an isomorphism, both when isomorphism means equivariant symplectomorphism and when it means equivariant homeomorphism. 
In both results, we will drop the assumption on the distribution of the non-extremal fixed surfaces that we had in \cite[Theorem 1.10]{KW25}.\\

To deduce that an isomorphism below a critical level that is well-behaved cohomology-wise extends over the critical level, we need some assumptions on the manifolds.
In the main theorem's version in which 'isomorphism' means 'equivariant symplectomorphism intertwining the momentum maps', the manifolds will be in the following setting.
\begin{Setting}\label{set:symplectic}
Let $M$ be a compact, connected semi-free Hamiltonian $S^1$-space of dimension $6$, and $\mu$ a momentum map of $M$.
    Assume the following:
    \begin{itemize}
        \item Every fixed sphere of $M$ at a non-extremal critical value $\lambda$ is \emph{exceptional} in $M_{\lambda}$, i.e., it is an embedded symplectic sphere of self intersection $-1$.
        \item Every connected component of ${M}^{S^1}$ is simply-connected.
        \item Every reduced space of $M$ that is of dimension $4$ is a symplectic rational surface, i.e., a symplectic $S^2 \times S^2$ or $\CP^2 \# k \overline{\CP^2}$ with a symplectic blowup form.
        \item For all consecutive critical levels $\lambda<\lambda'$ and all $\lambda<t_0<t_1<\lambda'$, the pair $(M_{t_0},(\omega_t)_{t\in [t_0,t_1]})$ is rigid. 
    \end{itemize}
\end{Setting}

The assumption on the fixed spheres being exceptional is important because for those, being in the same homology class implies that they are ambiently symplectically isotopic, which does not hold for symplectic spheres in general. 
The second bullet means that every fixed surface is a sphere and that $M$ is simply-connected. 
The rigidity assumption is as in  \cite{Go11}; we give it in \Cref{sec:pre}. For more details on the notation of a symplectic rational surface, see, e.g., \cite[Notation 1.7]{KW25}.\\

In the theorem's version in which 'isomorphism' means 'equivariant homeomorphism', we can relax the requirement on $\mu$.
\begin{definition}\label{def:pseudomomentummap}
   Let $M$ be a connected, Hamiltonian $S^1$-space with proper momentum map $M\to \R$ whose image is bounded. 
   A \textbf{pseudo momentum map} on $M$ is a proper, $S^1$-invariant Morse-Bott function $\mu\colon M\to \R$  
   whose 
   critical set is the fixed point set of $M$,
   and that agrees with some momentum map of $M$ in a neighbourhood of each connected component of $M^{S^1}$.
\end{definition}

The example to have in mind 
is a slight perturbation of an actual momentum map $\tilde{\mu}$ near the fixed points, that is, $\mu=\tilde{\mu}+\rho$, where $\rho$ is an $S^1$-invariant smooth function that is supported and constant near the fixed point set.\\
We will extend notions regarding momentum maps to pseudo momentum maps, as $M_t=\mu^{-1}(t)/{S^1}$ or $f\colon M^1\to M^2$ being a $\mu-S^1$-diffeomorphism/$\mu-S^1$-homeomorphism, meaning that $f$ is an equivariant diffeomorphism/homeomorphism intertwining $\mu_1$ and $\mu_2$.

In this version, the manifolds will be in the next setting.
\begin{Setting}\label{set:nonsymplectic}
Let $M$ be a compact, connected semi-free Hamiltonian $S^1$-space of dimension $6$, and $\mu$ a pseudo momentum map on $M$.
    Assume the following:
    \begin{itemize}
        \item For any non-extremal critical value $\lambda$ and any fixed sphere $S\subset M_{\lambda}$, $M_{\lambda}\smallsetminus S$ has cyclic fundamental group.
        \item Every connected component of ${M}^{S^1}$ 
        is simply-connected.
    \end{itemize}
\end{Setting}
 The first bullet is needed to apply \cite[Theorem 6.1]{Su15}, which states that two embedded spheres in a compact, connected four-manifold are, under the condition that the fundamental group of their complement is the same cyclic group, ambiently isotopic if they are homotopic.\\

In both versions,
the requirement of ``same data on fixed points" consists of an isomorphism between the fixed point sets at critical levels that is consistent with a certain isomorphism between the equivariant cohomologies, as defined below, and an isomorphism between neighbourhoods of the minima. This requirement includes the ``same $*$-small fixed point data" that we had in \cite{KW25}.\\

Recall that the (Borel) {\bf equivariant cohomology} over a coefficient ring $R$, $R=\Z$ or $R=\Q$, of the action of $S^1$ on a manifold $M$ is
$$
H_{S^1}^*(M): = H^*((M\times ES^1)/{S^1}; R),
$$ 
where $ES^1$ is a contractible space on which $S^1$ acts freely, which can
be taken to be $S^{\infty}$. Here
 $S^{\infty}$ denotes the unit sphere in $\C^{\infty}$, and 
$S^1$ acts on $M\times S^{\infty}$ diagonally with orbit space $\C \PP^{\infty}$. 
The equivariant cohomology of a point is
\begin{equation*} \label{eqhs1pt}
H^*_{S^1}(\pt) =H^*(\C \PP^{\infty};\Z)  = \Z[t], \,\,\,\deg(t)=2.
\end{equation*}
 If $S^1$ acts freely on $M$, then $M\times_{S^1} S^{\infty}\cong M/S^1 \times S^{\infty}$, and so the equivariant cohomology of $M$ is the cohomology of the orbit space $M/S^1$ since $S^{\infty}$ is contractible. 
 The map $$\pi^*:H_{S^1}^*(\pt) \to H_{S^1}^*(M)$$ induced from the constant map $\pi:M\to \pt$ 
endows
$H_{S^1}^*(M)$ with a $H_{S^1}^*(\pt)$-algebra structure.

 \begin{definition}\label{def:muisomorphic}
    Let $\mu_1$ and $\mu_2$ be pseudo momentum maps (resp.\ momentum maps) on Hamiltonian $S^1$-manifolds $M^1$ and $M^2$ with the same set $\mathcal{C}$ of critical values.\\
    For a critical value $\lambda$, denote by $F_i$  
    the fixed point set of $M^i$ at level $\lambda$ with the orientation coming from the symplectic form. (It will be clear from the context to which $\lambda$ this set belongs.)
    Suppose that 
    \begin{itemize}
    \item for each critical level $\lambda$, there is an orientation-preserving homeomorphism (symplectomorphism) $\eta_{\lambda}\colon F_1\to F_2$, and 
    \item there are isomorphisms of algebras $\eta\colon H_{S^1}^*(M^2)\to H_{S^1}^*(M^1)$ and $\eta'\colon H_{S^1}^*((M^2)^{S^1})\to H_{S^1}^*((M^1)^{S^1})$ 
    such that the diagram with the horizontal maps $\eta$ and $\eta'$ and the vertical maps induced by inclusion
        \begin{equation*}
				\begin{tikzcd}
				H_{S^1}^*(M^2) \arrow{r} \arrow{d} &  H_{S^1}^*(M^1) \arrow{d} \\
				H_{S^1}^*((M^2)^{S^1}) \arrow{r}  & H_{S^1}^*((M^1)^{S^1}) 
			\end{tikzcd}
		\end{equation*}
        is commutative, and 
        \item $\eta'$ restricted to $H_{S^1}^*(F_2)$ agrees with the induced map $\eta_{\lambda}^*$ for all critical levels $\lambda$.
        \end{itemize}
        Then we say that $H_{S^1}^*(M^1)$ and $H_{S^1}^*(M^2)$ are \textbf{(symplectically) $\mu$-isomorphic} via $(\eta,\eta',(\eta_{\lambda})_{\lambda\in \mathcal{C} })$.
\end{definition}

\begin{theorem}\label{thm:mainresult}
    Let $(M^1,\mu_1)$ and $(M^2,\mu_2)$ be as in \Cref{set:nonsymplectic} (\Cref{set:symplectic}).
    Assume that 
     \begin{itemize}
        \item $H_{S^1}^*(M^1)$ and $H_{S^1}^*(M^2)$ are (symplectically) $\mu$-isomorphic via $(\eta,\eta',(\eta_{\lambda})_{\lambda\in \mathcal{C} })$.
        \item There is $\delta>0$ and an orientation-preserving, w.r.t.\ the symplectic orientations, homeomorphism (symplectomorphism) between $\mu_1^{-1}([\lambda_{min},\lambda_{min}+\delta])$ and $\mu_2^{-1}([\lambda_{min},\lambda_{min}+\delta])$, where $\lambda_{min}$ is the common minimal critical value.
       \end{itemize}
   Then there is an equivariant homeomorphism (equivariant symplectomorphism) $f\colon M^1\to M^2$ such that $f^*=\eta$ as isomorphisms $H_{S^1}^*(M^2)\to H_{S^1}^*(M^1)$.
\end{theorem}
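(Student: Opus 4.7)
The plan is to construct $f\colon M^1\to M^2$ by induction on the critical values $\lambda_{\min}=\lambda_0<\lambda_1<\dots<\lambda_n=\lambda_{\max}$ of $\mu_1$ (equivalently of $\mu_2$, since they share the critical set $\mathcal{C}$), producing at each stage an equivariant homeomorphism (or symplectomorphism) $f_k\colon \mu_1^{-1}((-\infty,\lambda_k+\delta_k])\to\mu_2^{-1}((-\infty,\lambda_k+\delta_k])$ that intertwines $\mu_1$ and $\mu_2$, and whose induced map on equivariant cohomology agrees with the restriction of $\eta$ to classes supported below $\lambda_k+\delta_k$. The base case is exactly the second hypothesis of the theorem, which supplies $f$ near the minimum. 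Each inductive step splits into two sub-steps: first propagate $f_k$ through the free slab between $\lambda_k$ and $\lambda_{k+1}$, then extend across the critical level $\lambda_{k+1}$.

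On the free part $\mu_i^{-1}((\lambda_k,\lambda_{k+1}))$, the gradient flow of $\mu_i$ with respect to an invariant metric identifies the slab equivariantly with $S^1\times M^i_{t_0}\times (\lambda_k,\lambda_{k+1})$. In the topological setting this is enough to transport an equivariant homeomorphism just above $\lambda_k$ to one just below $\lambda_{k+1}$. In the symplectic setting the rigidity assumption in \Cref{set:symplectic} ensures that the family $(\omega^i_t)$ is determined up to symplectomorphism by $(M^i_{t_0},\omega^i_{t_0})$, and this produces the symplectomorphism intertwining the momentum maps. Cohomological compatibility on this slab is automatic since the slab deformation retracts onto any cross-section.

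Crossing $\lambda_{k+1}$ is the main obstacle. Let $F^i\subset (M^i)^{S^1}$ denote the components at this level. The equivariant slice theorem, combined with $\eta_{\lambda_{k+1}}\colon F^1\to F^2$ and the fact that the isotropy normal representations of a semi-free six-dimensional action are completely determined by the type of component, yields an $S^1$-equivariant identification $\Phi\colon \mathcal{N}(F^1)\to\mathcal{N}(F^2)$ of tubular neighbourhoods that restricts to $\eta_{\lambda_{k+1}}$ and intertwines the (pseudo) momentum maps. Whether $\Phi$ can be glued to the previously built $f_k$ on the slab just below $\lambda_{k+1}$ reduces, after passing to reduced spaces, to a single question: the induced map $(f_k)_{\lambda_{k+1}-\eps}\colon M^1_{\lambda_{k+1}-\eps}\to M^2_{\lambda_{k+1}-\eps}$ must be ambiently isotopic to a map carrying the intrinsic blow-down locus coming from $F^1$ onto the locus coming from $F^2$, under the standard wall-crossing description of reductions as the critical level is traversed. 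This is precisely the obstruction highlighted by the counterexample \cite[Example~2.1]{KW25}.

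The required homology matching is exactly what the $\mu$-isomorphism hypothesis delivers: the commuting square relating $\eta$, $\eta'$, and the restriction maps to the fixed point sets, combined with the Kirwan surjection from $H_{S^1}^*(M^i)$ onto $H^*(M^i_t)$ at regular levels, forces the Poincar\'e duals of the two blow-down configurations in $M^i_{\lambda_{k+1}-\eps}$ to correspond under $(f_k)_{\lambda_{k+1}-\eps}^*$. In \Cref{set:symplectic} every non-extremal fixed sphere descends to an exceptional symplectic sphere in a symplectic rational surface, so uniqueness up to ambient symplectic isotopy of symplectic exceptional curves in a fixed homology class produces the needed isotopy; in \Cref{set:nonsymplectic} the cyclic-complement hypothesis combined with \cite[Theorem~6.1]{Su15} gives a smooth ambient isotopy of homologous embedded spheres. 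After applying this isotopy to adjust $f_k$, the map $\Phi$ and the (adjusted) $f_k$ match along $\partial\mathcal{N}(F^1)$ and can be glued to produce $f_{k+1}$. Tracking the effect on equivariant cohomology via the Mayer--Vietoris sequence that assembles $H_{S^1}^*(M^i)$ from the open slab and $\mathcal{N}(F^i)$, and using the prescribed $(\eta,\eta',\eta_\lambda)$ to dictate the choices at each step, shows that $f^*=\eta$ is maintained through the induction; the case $k=n$ completes the proof.
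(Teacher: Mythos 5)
Your high-level architecture is the same as the paper's: the paper proves this theorem by exactly the induction you describe, with the base case given by the hypothesis at the minimum and the whole difficulty isolated in an extension-across-a-critical-level statement (\Cref{thm:extendingnonsymplecticandsymplectic}). The gap is in how you justify that crossing step, which is the actual content. You assert that the commuting square for $(\eta,\eta',\eta_\lambda)$ ``combined with the Kirwan surjection'' forces the Poincar\'e duals of the blow-down configurations in $M^i_{\lambda-\eps}$ to correspond under $(f_k)_{\lambda-\eps}$. That is not a formal consequence of Kirwan surjectivity: to transfer information from $H_{S^1}^*(M^i)$ to a specific degree-two class in the reduced space just below the level, one needs, for each fixed component $C$, a class in $H^2_{S^1}(M)$ that is (i) pinned down purely by fixed-point data, so that $\eta$ and $\eta'$ identify it across the two manifolds, and (ii) provably restricts to the Poincar\'e dual of $C'$ in $M_{\lambda-r}$. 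This is the paper's ``canonical class'' (\Cref{def:canonical}, \Cref{thm:Poincare=Canonical}): existence and uniqueness require perturbing $\mu$ to a pseudo momentum map so that $C$ is the \emph{only} fixed component at its level (to apply Tolman--Weitsman, \Cref{prop:prop2.1}), and the identification with $[C']^*$ is a genuinely geometric computation (decomposition of spheres across the level, plus the local orientation lemmas \Cref{lem:special1} and \Cref{lem:localcomputation}); one must also resolve the sign ambiguity $c_1=\pm\eta(c_2)$ and match the \emph{negative} normal bundles via the Euler class of the principal $S^1$-bundle at level $\lambda-r$ (\Cref{lem:new}, \Cref{lem:positivelinebundles}). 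None of this is supplied by your argument, and without it the counterexample obstruction you cite is not actually ruled out.

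A second, related gap: both isotopy inputs you invoke (uniqueness of exceptional symplectic spheres in a homology class, and \cite[Theorem 6.1]{Su15} under the cyclic-$\pi_1$-of-the-complement hypothesis of \Cref{set:nonsymplectic}) apply to one sphere at a time, while your gluing of $\Phi$ to $f_k$ along $\partial\mathcal{N}(F^1)$ implicitly matches the whole disjoint configuration at once. The paper handles this by again modifying the pseudo momentum map so that the fixed spheres, the index-$2$ points, and the index-$1$ points sit at distinct critical levels and are crossed one component at a time (\Cref{lem:modifiedmomentmap}, with \Cref{lem:fundamentalgroup} guaranteeing the cyclic-$\pi_1$ hypothesis survives the modification); it explicitly notes the extension trick fails when the relevant level has several components. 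Finally, turning ``ambiently isotopic on reduced spaces'' into an actual equivariant gluing needs the normalization to the identity near $\pi^{-1}(C_1')$, which uses simple-connectedness of the component to kill an $S^1$-valued gauge discrepancy and the isotopy extension theorem (\Cref{not:identity}, \Cref{rem:htoh}, \Cref{lem:topologicalextension}); your sketch does not address this, nor the extremal-level case, which needs separate arguments (\Cref{lem:neighborhoodsmaxima}).
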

    The last bullet is implied by the assumption that there is an orientation-preserving, w.r.t.\ the symplectic orientations, homeomorphism (symplectomorphism) solely between the minima of $M^1$ and $M^2$ that is covered by an isomorphism between their equivariant normal bundles. We will prove this in \Cref{lem:isomorphismneighborhoods}.\\
     
In the symplectic version, the assumption that the fixed spheres at non-extremal critical values are exceptional is new, compared to \cite[Theorem 1.10]{KW25}.
On the other hand, we drop the assumption that all non-extremal fixed surfaces, if exist, are mapped to the same critical value and this value is simple. \\

The key ingredient in the proof is the following theorem on extending an isomorphism over a critical level. We denote by 
$(M^i)^{\leq t}$ resp.\ $((M^i)^{S^1})^{\leq t}$ the set $\mu_i^{-1}((-\infty,t])$ resp.\ $\mu_i^{-1}((-\infty,t])\cap (M^i)^{S^1}$ for any $t\in \R$.
We use a similar notation for $\geq, <, >$, etc. We say that $\lambda-r$ is \emph{right below} the critical value $\lambda$ if there is no critical value in $[\lambda-r,\lambda)$ for $\mu_i$.
\begin{theorem}\label{thm:extendingnonsymplecticandsymplectic}
    Let $(M^1,\mu_1)$ and $(M^2,\mu_2)$ be as in \Cref{set:nonsymplectic} (\Cref{set:symplectic}). Assume that $H_{S^1}^*(M^1)$ and $H_{S^1}^*(M^2)$ are (symplectically) $\mu$-isomorphic via $(\eta,\eta',(\eta_{\lambda})_{\lambda\in \mathcal{C} })$.
    Let $\lambda$ be a common critical value of $\mu_1$ and $\mu_2$ and $r>0$ be such that $\lambda-r$ is right below $\lambda$ for $\mu_1$ and $\mu_2$.  
    Let $f\colon (M^1)^{\leq \lambda-r}\to (M^2)^{\leq \lambda-r}$ be an orientation-preserving equivariant homeomorphism (equivariant symplectomorphism) that intertwines $\mu_1$ and $\mu_2$ at level $\lambda-r$. If $\lambda$ is non-maximal, assume that the diagram
    \begin{equation*}
				\begin{tikzcd}
				H_{S^1}^*(M^2) \arrow[r, "\eta"] \arrow{d} &  H_{S^1}^*(M^1) \arrow{d} \\
				H_{S^1}^*((M^2)^{\leq \lambda-r}) \arrow[r, "f^*"]  & H_{S^1}^*((M^1)^{\leq \lambda-r})
			\end{tikzcd}
		\end{equation*}
        commutes.\\
        Then $f$ extends over the level $\lambda$ as an equivariant homeomorphism (equivariant symplectomorphism), meaning that there is $\delta>0$ and an equivariant, orientation-preserving homeomorphism (equivariant symplectomorphism)
 $$g\colon \mu_1^{-1}((-\infty,\lambda+\delta]) \to \mu_2^{-1}((-\infty,\lambda+\delta])$$
 such that $g=f$ on $\mu_1^{-1}((-\infty,\lambda-r])$ (on $\mu_1^{-1}((-\infty,\lambda-r-\delta])$). Also, it may be assumed that $g$ intertwines the pseudo momentum maps $\mu_1$ and $\mu_2$ near level $\lambda+\delta$, and if $\lambda$ is non-maximal that the diagram
 \begin{equation*}
				\begin{tikzcd}
				H_{S^1}^*(M^2) \arrow[r, "\eta"] \arrow{d} &  H_{S^1}^*(M^1) \arrow{d} \\
				H_{S^1}^*((M^2)^{\leq \lambda+\delta}) \arrow[r, "g^*"]  & H_{S^1}^*((M^1)^{\leq \lambda+\delta})
			\end{tikzcd}
		\end{equation*}
        commutes.
\end{theorem}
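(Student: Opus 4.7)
The plan is to extend $f$ across the level $\lambda$ in three stages — a collar above $\lambda-r$ built by gradient flow, a local model around the fixed set $F_i \subset \mu_i^{-1}(\lambda)$ via the equivariant tubular neighborhood theorem, and a gluing step on the overlap — followed by a short push past $\lambda$ by gradient flow in the regular region $(\lambda, \lambda+\delta]$.

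For the first stage, since no critical value of $\mu_i$ lies in $(\lambda-r, \lambda)$, the downward gradient flow of an $S^1$-invariant metric (compatible with $\omega_i$ in the symplectic case) trivialises $\mu_i^{-1}([\lambda-r, \lambda-\varepsilon])$ as a product, and composing $f$ with these trivialisations yields an equivariant isomorphism $f_1$ of collars for any small $\varepsilon > 0$. For the second stage, $\eta_\lambda\colon F_1 \to F_2$ together with the hypothesis that $\eta'|_{H^*_{S^1}(F_2)} = \eta_\lambda^*$ forces the equivariant normal bundles of corresponding components to be isomorphic: for isolated fixed points this amounts to matching $S^1$-weights in $H^*_{S^1}(\pt)$, and for fixed spheres it amounts to matching the first Chern class and the $S^1$-weights of the two rank-one sub-bundles in $H^*_{S^1}(S^2)$. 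The equivariant tubular neighborhood theorem — in the symplectic case its equivariant Darboux/coisotropic version — then produces an equivariant orientation-preserving isomorphism $f_2$ between $S^1$-invariant neighborhoods $U_1 \supset F_1$ and $U_2 \supset F_2$, which can be arranged to intertwine the pseudo momentum maps level-wise by exploiting the Morse--Bott normal form of $\mu_i$ around $F_i$.

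The crux is the third stage, gluing $f_1$ and $f_2$ on the annular overlap $A_i := U_i \cap \mu_i^{-1}([\lambda-\varepsilon, \lambda))$. This overlap descends to a neighbourhood, in the reduced space $M^i_{\lambda-\varepsilon}$, of a distinguished embedded surface: a small exceptional sphere for an isolated fixed point, and a sphere isotopic to the fixed sphere for a fixed surface at $\lambda$. The commutativity hypothesis on the $\eta$-diagram, when restricted via the Kirwan map to $H^*(M^i_{\lambda-\varepsilon})$, forces the descended $f_1$ to send each such sphere class in $M^1_{\lambda-\varepsilon}$ to the corresponding class in $M^2_{\lambda-\varepsilon}$. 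In the topological version, the cyclic-$\pi_1$ hypothesis then allows \cite[Theorem 6.1]{Su15} to produce an ambient isotopy matching the two embedded spheres; in the symplectic version, the exceptional-sphere hypothesis, combined with the rigidity of the reduced-space family and the rationality of $M^i_t$, yields an ambient symplectic isotopy. Lifting this isotopy equivariantly through the collar and interpolating to the identity near the boundary lets us modify $f_1$ to agree with $f_2$ on the overlap, producing a single extension $g'$ on $\mu_1^{-1}((-\infty, \lambda])$; a further short gradient-flow extension above $\lambda$ yields the desired $g$. Commutativity of the final cohomology diagram is then automatic: $g$ equals $f_2$ near $F_i$ (where $\eta'$ controls the restriction) and a gradient-flow push of $f$ elsewhere, so the diagram factors through the one hypothesized for $f$.

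The principal obstacle is the gluing in the presence of a fixed surface at $\lambda$: the counterexample of \cite[Example 2.1]{KW25} shows that without cohomological matching there is a genuine topological obstruction to extension. The hypothesis that the $\eta$-diagram commutes is precisely what rules out this obstruction on the level of homology classes, and the exceptional/cyclic-$\pi_1$ hypothesis is precisely what upgrades the resulting homological match to an ambient isotopy of embedded surfaces in a four-manifold reduced space.
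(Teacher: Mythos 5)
There is a genuine gap at the heart of your second stage. You assert that $\eta_\lambda$ together with the hypothesis $\eta'|_{H^*_{S^1}(F_2)}=\eta_\lambda^*$ already forces corresponding fixed components to have isomorphic \emph{equivariant normal bundles} (matching weights for isolated points, matching Chern classes of the two line sub-bundles for fixed spheres). It does not: $\eta'$ restricted to a fixed component only records an automorphism of $H^*(C)\otimes H^*_{S^1}(\pt)$ and carries no information about the index of an isolated fixed point or the degrees $(k,l)$ of the negative/positive normal line bundles of a fixed sphere. Extracting that information from the hypotheses is exactly the technical core of the paper: one must construct, for each component $C$, the canonical class in $H^*_{S^1}(M)$ (existence/uniqueness via a perturbation of the momentum map as in \Cref{lem:modifiedmomentmap} plus \Cref{prop:prop2.1}), prove that it restricts to the Poincar\'e dual of $C'$ in the reduced space right below $\lambda$ (\Cref{thm:Poincare=Canonical}, which needs the sphere-decomposition and local computations of \Cref{lem:special1} and \Cref{lem:localcomputation}), and only then use the \emph{global} isomorphism $\eta$ together with the commuting square for $f^*$ to conclude both that the positive normal bundles agree and that $f_{\lambda-r}[C_1']=[C_2']$, whence the negative degrees agree via equivariance of $f^{\lambda-r}$ (\Cref{lem:positivelinebundles}, \Cref{lem:new}). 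Your later appeal to ``restriction via the Kirwan map forces the descended $f_1$ to send each sphere class to the corresponding class'' is the same unproven claim in different words; as written, you are assuming the key lemma rather than proving it.

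The gluing stage also skips two points the paper has to work for. First, when the fixed set at level $\lambda$ has several components, you cannot invoke \cite[Theorem 6.1]{Su15} (or its symplectic substitute) to ambiently isotope the whole disjoint union $F_1'$ onto $F_2'$ at once: the cyclic-$\pi_1$/exceptionality hypotheses concern one sphere at a time, and the paper explicitly notes this failure mode; it is circumvented by perturbing $\mu_i$ into a pseudo momentum map that places each fixed component on its own critical level (\Cref{lem:modifiedmomentmap}, with \Cref{lem:fundamentalgroup} guaranteeing the $\pi_1$ hypothesis survives the perturbation) and then crossing the components one by one. Second, ``interpolating to the identity near the boundary'' hides an $S^1$-gauge obstruction: after matching the spheres downstairs, the lifted map and the local-model map differ near $\pi_1^{-1}(C_1')$ by a map to $S^1$, and one needs simple-connectivity of the fixed components to kill it (\Cref{rem:htoh}) before the Morse-flow extension of \Cref{lem:topologicalextension} applies. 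Finally, your proposal does not treat the case of maximal $\lambda$ at all, which in the paper requires a separate argument (\Cref{lem:neighborhoodsmaxima}, with three cases according to the dimension of the maximum), since there no cohomological diagram is assumed and the extension is onto the extremum itself.
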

\begin{proof}[Proof of \Cref{thm:mainresult} assuming \Cref{thm:extendingnonsymplecticandsymplectic}]
We write 'isomorphism' for equivariant homeomorphism or equivariant symplectomorphism, according to the setting of $(M^i,\mu_i)$.
    By assumption, $(M^1,\mu_1)$ and $(M^2,\mu_2)$ have common critical values $\lambda_0<\hdots<\lambda_m$, and an isomorphism of neighborhoods of their minima.    
    So we can use \Cref{thm:extendingnonsymplecticandsymplectic} on that isomorphism to obtain an isomorphism $g\colon (M^1)^{\leq \lambda_1+\delta}\to (M^2)^{\leq \lambda_1+\delta}$. If $m\neq 1$, by \Cref{thm:extendingnonsymplecticandsymplectic} itself, the assumptions to apply \Cref{thm:extendingnonsymplecticandsymplectic} to $g$ are fulfilled, so we obtain an isomorphism $g\colon (M^1)^{\leq \lambda_2+\delta}\to (M^2)^{\leq \lambda_2+\delta}$. We can repeat the argument until we exhaust all of $M^1$ and $M^2$.
\end{proof}

The main task is therefore to prove \Cref{thm:extendingnonsymplecticandsymplectic}.\\
For non-extremal $\lambda$, let us start by pointing out the homological obstruction to extend an isomorphism over a critical level.
\begin{Notation}\label{not:canonical}
Let $M$ be a compact, connected semi-free Hamiltonian $S^1$-space of dimension $6$, oriented according to its symplectic form, and $\mu$ be a pseudo momentum map on $M$.\\
Let $\lambda$ be a non-extremal critical value.
 For 
 sufficiently small $r>0$,  
 there is an equivariant map $f_{\Morse}\colon \mu^{-1}(\lambda-r)\to \mu^{-1}(\lambda)$, called \textit{the Morse flow}, induced by the 'time-$r$-flow' of the normalized gradient vector field of $\mu$; see \S \ref{rem:criticalvalue}.

     Let $C$ be a connected component of the fixed point set $F$ of $M$ at $\lambda$ that is either a sphere or a point of index\footnote{We use the convention that the index of $\mu$ is half the Morse index.} $2$.
     The orbit space $C' \subset M_{\lambda-r}$ of the preimage of $C$ under the Morse flow $f_{\Morse}$ is a sphere. 
    Equip both $C'$ and $M_{\lambda-r}$ with the orientations coming from the symplectic form, and choose fundamental classes according to these orientations.
     Let $[C']\in H_2(M_{\lambda-r})$ be the image of the fundamental class of $C'$ under the embedding $C'\hookrightarrow M_{\lambda-r}$, and let $[C']^*\in H^2(M_{\lambda-r})$ be its Poincaré dual with respect to the fundamental class $[M_{\lambda-r}]$.
\end{Notation}
For $M^1$ and $M^2$, we get a collection of classes, one for each fixed sphere or fixed point of index $2$ at level $\lambda$. In fact, there is an even finer subdivision as follows.
We denote by:
\begin{enumerate}
    \item $\mathcal{D}^{\pt}_i$ the \textbf{multiset} (that is, allowing double counts\footnote{This is not necessary here, because these are classes of self-intersection $-1$ belonging to pairwise disjoint spheres.}) of primitive homology classes in $H_2(M^i_{\lambda-r})$ corresponding to those spheres in $M^i_{\lambda-r}$ that are mapped to a single point under $f_{\Morse}$.
    \item $\mathcal{D}^{\sph}_i(k,l)$  the \textbf{multiset} (that is, allowing double counts) of primitive homology classes in $H_2(M^i_{\lambda-r})$  corresponding to those spheres in $M^i_{\lambda-r}$ that are mapped to a fixed sphere under $f_{\Morse}$ whose negative resp.\ positive normal line bundle in $M^i$ is of degree $k$ resp.\ $l$.
\end{enumerate}
Certainly, if $f$ admits an extension over $\lambda$, then it has to respect this subdivision. In fact, we will show 
\begin{lemma} \label{lem:new}
In \Cref{thm:extendingnonsymplecticandsymplectic}, the map $f_{\lambda-r}$ sends $\mathcal{D}^{\pt}_1$ bijectively into $\mathcal{D}^{\pt}_{2}$ and $\mathcal{D}^{\sph}_1(k,l)$ bijectively into $\mathcal{D}^{\sph}_2(k,l)$.\\
    Also, for any sphere $C_1\subset F_1$, the equivariant normal bundles of $C_1$ and $\eta_{\lambda}(C_1)$ are isomorphic.
\end{lemma}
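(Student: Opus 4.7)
The plan is to pass to the reduced spaces $M^i_{\lambda-r}$ and characterise each class $[C']^*$ via equivariant Morse--Bott theory in a way that is manifestly preserved by the $\mu$-isomorphism. Since $S^1$ acts freely on $\mu_i^{-1}(\lambda-r)$, I have $H^*(M^i_{\lambda-r}) \cong H_{S^1}^*(\mu_i^{-1}(\lambda-r))$, and by the commuting diagram assumed in \Cref{thm:extendingnonsymplecticandsymplectic} the pullback $f_{\lambda-r}^*$ corresponds under this identification to $\eta$ composed with the restrictions $H_{S^1}^*(M^i) \to H_{S^1}^*(\mu_i^{-1}(\lambda-r))$.

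Next, I would recognise each $[C']^*$ via the long exact sequence of the pair $((M^i)^{\leq \lambda}, (M^i)^{\leq \lambda-r})$. By excision and the equivariant Thom isomorphism, $H_{S^1}^q((M^i)^{\leq \lambda}, (M^i)^{\leq \lambda-r}) \cong \bigoplus_{C \subset F_i^\lambda} H_{S^1}^{q - 2\rk_{\C} N^-_C}(C)$, and the equivariant Thom class $\tau_C$ restricts to $e^{S^1}(N^-_C)$ on $C$ and to zero on the other fixed components. By Kirwan injectivity $H_{S^1}^*(M^i) \hookrightarrow H_{S^1}^*((M^i)^{S^1})$ for Hamiltonian actions, lifts to $H_{S^1}^*(M^i)$ of classes with prescribed fixed-point restrictions are unique; together with the commuting diagrams of the $\mu$-isomorphism, this forces $\eta$ to identify $\tau_{C_2}$ with $\tau_{\eta_\lambda^{-1}(C_2)}$. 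Pushing $\tau_C$ through the long exact sequence recovers $[C']^* \in H^2((M^i)^{\leq \lambda-r})$ (directly in degree $2$ for a fixed sphere, and after dividing by an extra factor of $t$ in the corresponding Gysin sequence for an isolated index-$2$ point, to account for the complex rank $2$ of $N^-_C$). Hence $f_{\lambda-r}^*([C'_2]^*) = [(\eta_\lambda^{-1}(C_2))']^*$, giving a bijection consistent with $\eta_\lambda$.

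Finally, to upgrade this to preservation of the subdivision into $\mathcal{D}^{\sph}(k,l)$ and $\mathcal{D}^{\pt}$ and to deduce part (b): the integer $l$ equals the self-intersection of $C'$ in the $4$-manifold $M^i_{\lambda-r}$ (by the local reduction model, in which the neighbourhood of $C'$ is the total space of $N^+_C / S^1$ with $C'$ as the zero section), so it is preserved because $f_{\lambda-r}$ is a diffeomorphism. The integer $k$ is the $x_C$-coefficient of $e^{S^1}(N^-_C) = k\, x_C - t$ in $H_{S^1}^*(C) = \Z[x_C, t]/(x_C^2)$ for a fixed sphere (and is trivially preserved for an isolated index-$2$ point, where $e^{S^1}(N^-_C) = t^2$); it is preserved because $\eta'|_{C_2}$ is a $\Z[t]$-algebra isomorphism sending $x_{C_2} \mapsto x_{\eta_\lambda^{-1}(C_2)}$ (orientation-preserving) and, by the Thom-class identification above, mapping $e^{S^1}(N^-_{C_2})$ to $e^{S^1}(N^-_{\eta_\lambda^{-1}(C_2)})$. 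Both the subdivision claim and, via the classification of $S^1$-equivariant complex rank-$2$ bundles over $S^2$ with semi-free action by the pair of degrees of their weight-$\pm 1$ summands, part (b) now follow. The hard part will be the middle-paragraph Thom-class step: precisely interpreting the connecting map (especially the role of the ``extra $t$'' in the index-$2$-point case) and using Kirwan injectivity together with the commuting diagrams of \Cref{def:muisomorphic} to force $\eta(\tau_{C_2}) = \tau_{\eta_\lambda^{-1}(C_2)}$.
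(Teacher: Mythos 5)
Your overall strategy (attach to each fixed component at level $\lambda$ a distinguished equivariant class, use Kirwan injectivity and the compatibility diagrams to show $\eta$ matches these classes, then read off the effect of $f_{\lambda-r}$ on $[C']$) is the same in spirit as the paper's, but the step you yourself flag as ``the hard part'' is exactly where the argument breaks, and it breaks in the direction you set it up. You build your classes from the pair $((M^i)^{\leq\lambda},(M^i)^{\leq\lambda-r})$ and the \emph{negative} normal bundles; by exactness of the long exact sequence of that pair, the image of such a Thom class in $H^*_{S^1}((M^i)^{\leq\lambda})$ restricts to \emph{zero} on $(M^i)^{\leq\lambda-r}$. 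So ``pushing $\tau_C$ through the long exact sequence'' cannot recover $[C']^*\in H^2(M^i_{\lambda-r})$, and these classes transfer no information to $f_{\lambda-r}$: the only place the hypotheses tie $\eta$ to $f$ is the commuting diagram over $(M^i)^{\leq\lambda-r}$, where your classes vanish. The classes that do the job are the ``downward'' canonical classes of \Cref{def:canonical} (vanishing on $M^{>\lambda}$ and restricting to the equivariant Euler class of the \emph{positive} normal bundle on $C$), and the statement that such a class restricts below to the Poincar\'e dual of $[C']$ is precisely \Cref{thm:Poincare=Canonical}. That identification is not a formal Thom-isomorphism/Gysin manipulation (no amount of ``dividing by $t$'' produces it); in the paper it requires the momentum-map modification of \Cref{lem:modifiedmomentmap}, a decomposition of representing spheres along the Morse flow, and the local orientation computations of \Cref{lem:special1} and \Cref{lem:localcomputation}. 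Leaving this step unproved leaves the main claim, $f_{\lambda-r}[C_1']=[C_2']$, unsupported.

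Two further points. First, ``lifts with prescribed fixed-point restrictions are unique'' is not a consequence of Kirwan injectivity alone: your normalization prescribes nothing at fixed components on the other side of $\lambda$, so global lifts are not unique (any canonical class of a component at a different level can be added). The paper avoids this by working on $(M^1)^{\geq\lambda-r}$, isolating $C$ at its own level via the pseudo-momentum-map modification so that \Cref{prop:prop2.1} applies, and fixing the resulting sign by comparing $t$-coefficients (proof of \Cref{lem:positivelinebundles}). Second, in the last paragraph $l$ is not the self-intersection of $C'$ in $M^i_{\lambda-r}$: the normal bundle of $C'$ in the reduced space is built from \emph{both} normal line bundles of $C$ (its degree involves $k$ and $l$ together), and in any case an argument via self-intersections presupposes that $f_{\lambda-r}$ carries $[C_1']$ to $[C_2']$, which is the point at issue. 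The paper instead recovers the degree of the negative normal bundle from the Euler class of the principal $S^1$-bundle over $M^i_{\lambda-r}$ evaluated on $[C_i']$ (\Cref{rem:criticalvalue}), which an equivariant homeomorphism of the level sets automatically intertwines; your route for $k$ via $\eta'$ could be salvaged along the lines of \Cref{prop:prop2.1}, but it does not substitute for the missing Poincar\'e-duality step.
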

To prove the lemma, we will relate the classes in $\mathcal{D}^{\pt}_i$ and $\mathcal{D}^{\sph}_i(k,l)$ to the equivariant cohomology $H^*_{S^1}(M^i)$ as follows. To any such class $[C_i']$ in $H_{2}(M^{i}_{\lambda-r})$, we first consider its Poincaré dual $[C_i']^*$ in $H^2(M^i_{\lambda-r})$, view that as a class in $H^2_{S^1}(\mu_i^{-1}(\lambda-r))$, and then find a special extension of that class to $H^2_{S^1}(M^i)$. This extension enables us to use the isomorphism between the equivariant cohomologies of $M^1$ and $M^2$ to conclude that $f_{\lambda-r}$ indeed respects $\mathcal{D}^{\pt}_i$ and $\mathcal{D}^{\sph}_i(k,l)$.\\

Let us now define the type of class this extension will be.
Recall that the 
\emph{equivariant Euler class} $e^T(V)$ of an $S^1$-invariant vector bundle $V\to M$ is the Euler class of the vector bundle 
$V\times_{S^1} S^{\infty} \rightarrow M \times_{S^1} S^{\infty}$. The $i$-th 
equivariant Chern class $c_i^{S^1}(V)$ of a complex $S^1$-invariant 
vector bundle is defined similarly.
\begin{definition}\label{def:canonical}
    Let $C$ be a fixed point component of $M$ at level $\lambda$. A cohomology class $c \in H_{S^1}^{*}(M)$ is called {\bf canonical class of $C$} if $c$ vanishes on $M^{>\lambda}$, restricts to $C$ as the equivariant Euler class $e^+_{S_1}(C)$ of its positive normal bundle in $M$ (see \Cref{rem:criticalvalue}), and vanishes on all other fixed components at level $\lambda$.
\end{definition}
 By using the isomorphism $H_{S^1}^*(X;\Z)\to H^*(X/S^1;\Z)$ that exists for any free $S^1$-manifold $X$, we may evaluate a class $c\in H_{S^1}^*(X;\Z)$ on a homology class of $X/S^1$. This allows us to identify $H_{S^1}^*(\mu^{-1}({\lambda-r}))$ with $ H^*(M_{\lambda-r})$.

\begin{theorem}\label{thm:Poincare=Canonical}
Let $M$, $\mu$, $\lambda$, $r$, $[M_{\lambda-r}]$, $C$, $[C']$ and $[C']^{*}$ be as in \Cref{not:canonical}.
There exists a unique $c \in H_{S^1}^{*}(M)$ that is canonical class of $C$.
Moreover, the class $c$ restricts to $[C']^*\in H^2(M_{\lambda-r})$ under the natural $H^*_{S^1}(M)\to H_{S_1}^*(\mu^{-1}({\lambda-r}))\cong H^*(M_{\lambda-r})$ induced by the inclusion, that is, for a class in $H_{2}(M_{\lambda-r})$ represented by an immersed sphere\footnote{Any homology class in $H_2(M_{\lambda-r})$ can be represented by a smoothly immersed sphere $S\subset M_{\lambda-r}$.} $S$,
\begin{equation}\label{eq:toShow}
        c([S])=([C']^*\cup [S]^*)([M_{\lambda-r}]),
    \end{equation}
    where 
    $[S]^*$ is the Poincaré dual of $[S]\in H_2(M_{\lambda-r})$ w.r.t.\ the fundamental class $[M_{\lambda-r}]$.
\end{theorem}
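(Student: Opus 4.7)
\textit{Plan of proof.}
The strategy is to realize $c$ geometrically as the equivariant Poincar\'e dual of the negative unstable disk of $C$. Concretely, let $D^-(C)\subset \mu^{-1}([\lambda-r,\lambda])$ denote the image of the negative normal disk bundle of $C$ under the flow of $-\nabla\mu$, truncated at $\mu^{-1}(\lambda-r)$; it is an $S^1$-invariant submanifold-with-boundary of real dimension $4$ in $M$, hence of codimension $2$, whose normal bundle along $C$ coincides with the positive normal bundle $N^+(C)$. The equivariant Thom class of this normal bundle, together with extension by zero and the natural map to $H^2_{S^1}(M)$, defines a class $c\in H^2_{S^1}(M)$. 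It vanishes on $M^{>\lambda}$ since $D^-(C)\subset M^{\leq\lambda}$, it restricts to $C$ as $e^+_{S^1}(C)$ because the Thom class restricts to the equivariant Euler class of the normal bundle on the zero section, and it vanishes on every other fixed component at level $\lambda$ by disjointness. Hence $c$ is a canonical class of $C$.

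For uniqueness, suppose $c'$ is another. Then $c-c'$ vanishes on $M^{>\lambda}$, so by the long exact sequence of $(M,M^{>\lambda})$ it lies in the image of $H^2_{S^1}(M,M^{>\lambda})\to H^2_{S^1}(M)$. Applying equivariant Morse--Bott to $-\mu$ and excision yields
\[
H^{2}_{S^1}(M,M^{>\lambda})\;\cong\;\bigoplus_{F\text{ at }\lambda}H^{2-2k_F^+}_{S^1}(F),
\]
where $k_F^+$ is the complex positive index of $F$; the map to $H^2_{S^1}(M)$ sends a tuple $(\tilde\alpha_F)_F$ to a class whose restriction to each $F$ at level $\lambda$ equals $\tilde\alpha_F\cdot e^+_{S^1}(F)$ and whose restriction to any other such $F$ is $0$. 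Since $(c-c')|_F=0$ for every $F$ at $\lambda$ and $e^+_{S^1}(F)$ is not a zero-divisor in $H^*_{S^1}(F)$, every $\tilde\alpha_F$ vanishes, so the preimage of $c-c'$ is zero. Finally the displayed map is injective because $H^1_{S^1}(M^{>\lambda})=0$: iterated attachment of positive disk bundles $D^+(F)$ of real rank $\geq 2$ along connected sphere bundles $S^+(F)$, starting from a simply-connected neighborhood of the maximum, preserves simple-connectedness by van Kampen, so $\pi_1(M^{>\lambda})=0$, whence $H^1(M^{>\lambda})=0$, and the Borel fibration gives $H^1_{S^1}(M^{>\lambda})=0$ as $H^{\mathrm{odd}}(BS^1)=0$.

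For the identification with $[C']^*$, by naturality of equivariant Poincar\'e duality under the transverse intersection $D^-(C)\pitchfork\mu^{-1}(\lambda-r)=f_{\Morse}^{-1}(C)$ we obtain $c|_{\mu^{-1}(\lambda-r)}=[f_{\Morse}^{-1}(C)]^*\in H^2_{S^1}(\mu^{-1}(\lambda-r))$. Since $S^1$ acts freely on $\mu^{-1}(\lambda-r)$, the isomorphism $H^*_{S^1}(\mu^{-1}(\lambda-r))\cong H^*(M_{\lambda-r})$ sends this to the Poincar\'e dual of the quotient $f_{\Morse}^{-1}(C)/S^1=C'$, i.e., to $[C']^*$. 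The cup product formula~(\ref{eq:toShow}) then follows from the standard pairing identity $c([S])=\langle c|_{M_{\lambda-r}},[S]\rangle=\langle[C']^*\cup[S]^*,[M_{\lambda-r}]\rangle$.

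The main technical obstacle I anticipate is the equivariant Morse--Bott decomposition of $H^*_{S^1}(M,M^{>\lambda})$ in the pseudo momentum map setting: although $\mu$ need not be a genuine momentum map globally, its definition as a pseudo momentum map guarantees agreement with a genuine momentum map in a neighborhood of each connected component of $M^{S^1}$, so all local Morse-theoretic data (indices, Thom classes, equivariant Euler classes, and connectedness of attaching spheres) is unchanged, and the standard excision plus Thom-isomorphism argument for $-\mu$ goes through.
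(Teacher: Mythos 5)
Your existence step has a genuine gap. The truncated unstable set $D^-(C)$ is a compact manifold \emph{with boundary}, and its boundary $f_{\Morse}^{-1}(C)\subset\mu^{-1}(\lambda-r)$ lies in the interior of the closed manifold $M$; the Thom class of the normal bundle does not vanish on the part of the tubular neighbourhood lying over $\partial D^-(C)$, so ``extension by zero'' does not define a class in $H^2_{S^1}(M)$. What your construction legitimately produces is a class on $M^{\geq\lambda-r}$ (where $D^-(C)$ is properly embedded), or equivalently a relative class; getting from there to a class on all of $M$ with the required properties is precisely the nontrivial content, and it needs the Tolman--Weitsman/Kirwan surjectivity of restriction across all the critical levels below $\lambda$. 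This is the input the paper obtains from \Cref{prop:prop2.1}, after first modifying $\mu$ via \Cref{lem:modifiedmomentmap} so that $C$ is the \emph{only} fixed component at its level (the proposition requires a connected critical set, and the map is only a pseudo momentum map afterwards, cf.\ \Cref{rem:momentummap}); you never invoke any of this.

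Your uniqueness argument also fails: the identification $H^{2}_{S^1}(M,M^{>\lambda})\cong\bigoplus_{F\text{ at }\lambda}H^{2-2k_F^+}_{S^1}(F)$ is false, because the pair $(M,M^{>\lambda})$ records the attachment of \emph{all} critical levels $\leq\lambda$, so the relative group contains analogous summands for every fixed component strictly below level $\lambda$ as well. In degree $2$ these extra summands are nonzero whenever there is, say, a lower non-extremal fixed sphere, a lower index-$2$ point, or a four-dimensional minimum, and then the difference $c-c'$ is not forced to vanish by your argument; this lower-level ambiguity is exactly what the paper's proof controls by working, after the modification of $\mu$, with \Cref{prop:prop2.1} and Kirwan injectivity on the region above $\lambda-r$ -- and it is only the restriction of $c$ to $M^{\geq\lambda-r}$ that is pinned down and later used (cf.\ \Cref{lem:positivelinebundles}). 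Your auxiliary claim $H^1_{S^1}(M^{>\lambda})=0$ moreover uses simple-connectedness of fixed components, which is not a hypothesis of \Cref{not:canonical}. On the positive side, your idea for the ``moreover'' part -- reading off $c|_{\mu^{-1}(\lambda-r)}$ as the dual of $\partial D^-(C)=f_{\Morse}^{-1}(C)$ -- is a genuinely different and shorter route than the paper's decomposition of an arbitrary immersed sphere with the local computations of \Cref{lem:special1} and \Cref{lem:localcomputation}; but it only becomes a proof once the class is correctly constructed and identified on $M^{\geq\lambda-r}$, and you would still have to verify the orientation and sign conventions (symplectic orientation of $M_{\lambda-r}$ versus the weight-$+1$ equivariant Euler class) that those two lemmas exist to check.
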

The proof of the first part relies on the following result of Tolman and Weitsman.

\begin{proposition}\cite[Proposition 2.1]{TW99} \label{prop:prop2.1}
Let $M$ be a compact, connected Hamiltonian $S^1$-manifold with momentum map $\mu$, and $\lambda$ a critical value; choose $r>0$ small enough such that $\lambda$ is the only critical value in $(\lambda-r,\lambda+r)$.
    If $C:=M^{S^1}\cap \mu^{-1}(\lambda)$  is connected, the long exact sequence in equivariant cohomology for the pair $(M^{>\lambda+r},M^{>\lambda-r})$ splits into a short exact sequence
    \begin{align*}
        0\to H_{S^1}^*(M^{>\lambda-r},M^{>\lambda+r};\Q) \to H_{S^1}^*(M^{>\lambda-r};\Q) \overset{k^*}{\to} H_{S^1}^*(M^{>\lambda+r};\Q)\to 0.
    \end{align*}
    Moreover, the restriction $H_{S^1}^*(M^{>\lambda-r};\Q)\to H_{S^1}^*(C;\Q)$ induces an isomorphism from the kernel of $k^*$ to those classes in $H_{S^1}^*(C;\Q)$ that are multiples of $e_{\lambda}$, the equivariant Euler class of the positive normal bundle of $C$ in $M$ (as defined in \Cref{rem:criticalvalue}).\\
    If the action is semi-free, this is also true over $\Z$.
\end{proposition}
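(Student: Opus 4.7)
The plan is to compute the relative equivariant cohomology $H^*_{S^1}(M^{>\lambda-r}, M^{>\lambda+r})$ via equivariant Morse--Bott theory and the Thom isomorphism, and then to exploit the $H^*_{S^1}(\pt)$-module structure to force the connecting homomorphism of the long exact sequence of the pair to vanish.

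For the first step, I would apply Morse--Bott theory to $-\mu$, so that $M^{>\lambda+r} \subset M^{>\lambda-r}$ becomes an inclusion of sublevel sets of $-\mu$. Since $\lambda$ is the only critical value of $\mu$ in $(\lambda-r, \lambda+r)$, crossing this level attaches, in an $S^1$-equivariant way, the disk bundle $D(\nu^+)$ of the positive normal bundle $\nu^+ \to C$ along its sphere bundle $S(\nu^+)$. Excision together with the equivariant Thom isomorphism for $\nu^+$ then gives
\[
H^*_{S^1}(M^{>\lambda-r}, M^{>\lambda+r}) \;\cong\; H^*_{S^1}(D(\nu^+), S(\nu^+)) \;\cong\; H^{*-d^+}_{S^1}(C),
\]
where $d^+$ is the real rank of $\nu^+$.

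With this in hand, I would write down the long exact sequence of the pair and observe, using the fact that a Thom class restricts to the Euler class along the zero section, that the composition of the inclusion $\alpha\colon H^{*-d^+}_{S^1}(C) \cong H^*_{S^1}(M^{>\lambda-r}, M^{>\lambda+r}) \to H^*_{S^1}(M^{>\lambda-r})$ with restriction to $C$ is exactly multiplication by the equivariant Euler class $e_\lambda$. Splitting the long exact sequence into short exact sequences is therefore equivalent to injectivity of $\alpha$, which in turn is equivalent to injectivity of multiplication by $e_\lambda$ on $H^*_{S^1}(C)$. Since $C$ is pointwise fixed, $H^*_{S^1}(C) \cong H^*(C)[t]$; and in the semi-free setting the $S^1$-weights on $\nu^+$ are all $+1$ (the weight-$+1$ directions are precisely those in which $\mu$ increases), so by the splitting principle $e_\lambda = \prod_i (x_i + t)$ with $x_i$ the Chern roots of $\nu^+$ is a monic polynomial in $t$ over $H^*(C;\Z)$ and thus a non-zero-divisor. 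Rationally, the same argument works for arbitrary nonzero weights, since the leading coefficient in $t$ is then the nonzero rational $\prod_i m_i$. This gives the short exact sequence, and the same multiplicative formula simultaneously identifies $\ker k^*$ with the submodule of $e_\lambda$-multiples in $H^*_{S^1}(C)$, as claimed.

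The most delicate point I anticipate is making the identification of $\alpha$ with multiplication by $e_\lambda$ fully rigorous in the integral semi-free setting; this reduces to the fact that $\nu^+$ carries a canonical equivariant complex structure, coming from the symplectic form together with the weight-$+1$ $S^1$-action, so that its equivariant Thom class is the top equivariant Chern class and is defined integrally. The connectedness of $C$ is used to ensure $H^*_{S^1}(C)$ is a single polynomial ring rather than a product. Once orientation and weight conventions are pinned down, the remainder of the argument is a standard diagram chase.
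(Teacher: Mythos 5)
Your argument is correct and follows essentially the same route as the paper, which cites Tolman--Weitsman's proof and, in the accompanying remark, points to exactly the ingredients you spell out: applying the argument to $-\mu$, the (equivariant) Thom isomorphism for the positive normal bundle, and injectivity of cupping with the equivariant Euler class, which over $\Z$ is justified by semi-freeness making $e_\lambda$ monic in $t$ (the content of \cite[Lemma 6.1]{TW03}). Nothing essential is missing; only your sign/weight conventions on $\nu^+$ would need to be pinned down to match \Cref{rem:criticalvalue}.
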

\begin{remark}
    The way the rational version of \Cref{prop:prop2.1} is stated is not the way it is stated in \cite{TW99}. However, it is an immediate consequence of it, by applying \cite[Proposition 2.1]{TW99} to $-\mu$.\\
    The integer version follows as the rational version does, using the semi-freeness to apply \cite[Lemma 6.1]{TW03} for the justification that cupping with the Euler class is an injection in \cite[Diagram (2.5)]{TW99}. Also, in that diagram, the isomorphism corresponding to the bottom vertical arrow stems from the equivariant version of the Thom isomorphism, which is simply the usual Thom isomorphism applied to the pullback of the negative normal bundle of $C$ under $C\times_{S^1} S^{\infty}\to C$.\\
    It turns out that the assumption on the semi-freeness is unnecessarily strong, but we don't need the stronger versions here.
\end{remark}

To show the existence and uniqueness of a canonical class $c$ of $C$, 
we need to first modify $\mu$ near the fixed point set, so that $C$ equals the fixed point set $F$ at some level of the modified map, as needed in order to apply \Cref{prop:prop2.1}. The modified map might no longer be a momentum map for the $S^1$-action, but is a pseudo momentum map.\\

This kind of modification is also important thereafter to use \cite[Theorem 7.1]{Su15}, which states that, under certain conditions, two homologous, topologically embedded spheres in a simply-connected four-manifold are ambiently isotopic. This theorem allows us to deduce from \Cref{lem:new} that the equivariant homeomorphism $f$  
in \Cref{thm:extendingnonsymplecticandsymplectic} 
can be extended below $\lambda$ such that it maps
$f_{\Morse}^{-1}(F_1)=F_1'$ to $f_{\Morse}^{-1}(F_2)=F_2'$ at a certain level right below the critical level $\lambda$. This would not always work if $F_1'$ resp.\ $F_2'$ had multiple connected components. Having achieved that, it turns out that we can simply use the Morse flow on both manifolds to extend $f$ beyond the critical level. 
In the symplectic version of \Cref{thm:extendingnonsymplecticandsymplectic}, \cite[Theorem 7.1]{Su15} is replaced by \cite[Lemma 5.20]{KW25}, which relies on results of Lalonde-Pinsonnault \cite{LP04}.
\begin{remark}\label{rem:momentummap}
    Many well-known theorems relating equivariant cohomology and the momentum map (as, for example, that $H_{S^1}^*((M^1)^{< t})\to H_{S^1}^*(((M^1)^{S^1})^{< t})$ is injective for all $t\in \R$ from \cite{Ki84}, but also \Cref{prop:prop2.1}) still hold true when $\mu_1$ and $\mu_2$ are pseudo momentum maps. This is quite clear from the method these statements are proven, namely by induction on the critical levels. Indeed, the actual properties needed from the  map to carry out the proofs of the mentioned theorems are the following.
    \begin{itemize}
        \item The maximal and minimal level sets are connected.
        \item The  map is an invariant Morse-Bott function with even Morse-Bott indices.
        \item The critical set equals the set of fixed points.
        \item Each component of the critical set is orientable.
    \end{itemize}
    These properties hold for pseudo momentum maps.
\end{remark}

The paper is organized as follows. In \Cref{sec:pre}, we recall the rigidity assumption and the definition and properties of the Morse flow.
In \Cref{sec:canonical}, we prove \Cref{thm:Poincare=Canonical} and some technical ingredients used for that. This includes a lemma that allows us to modify a momentum map into a pseudo momentum map that has exactly one fixed point component at each critical level.
In \Cref{sec:extend}, we first deduce \Cref{lem:new} from \Cref{thm:Poincare=Canonical} and then conclude \Cref{thm:extendingnonsymplecticandsymplectic}, extending an isomorphism below a critical level $\lambda$ beyond the critical level, both for non-extremal $\lambda$ and extremal $\lambda$.

\subsection*{Acknowledgements} 
We were motivated by Sue Tolman's assertion that assuming an isomorphism of equivariant cohomologies, in addition to same fixed point data, is required in order to prove that semi-free Hamiltonian $S^1$-manifolds are isomorphic. We thank Sue for helpful discussions. 
 The first author was supported in part by NSF-BSF Grant 2021730.

\section{Preliminaries}\label{sec:pre}

Let $(M,\omega,\mu)$ be a connected Hamiltonian $S^1$-manifold. Assume that the $S^1$-action is semi-free. Assume that the momentum map $\mu \colon M \to \R$ is proper, in the sense that a level set is compact, and has a bounded image.

\subsection*{Reduced spaces and the rigidity assumption.}
 For a regular value $t$ of $\mu$, the level set $P_t:=\mu^{-1}(t)$ is a compact manifold of dimension $\dim M-1$. It is connected because $\mu$ is Morse-Bott with even indices and $M$ is connected \cite{At82}.\\
Since the $S^1$-action is semi-free, it is free outside its fixed point set, so at a regular $t$, the {\em orbit space} $M_t:=P_t/S^1$ is a manifold 
   of dimension $\dim M-2$ and $S^1\to P_t\to M_t$ is a principal $S^1$-bundle. Moreover, the restriction of $\omega$ to $P_t$ descends to a symplectic form $\omega_t$ on $M_t$ by \cite{MW74}; we call $(M_t,\omega_t)$ the \textit{reduced space} at $t$.\\
    If $M$ is of dimension six  (and the action is semi-free), it turns out that even for non-extremal critical values $\lambda$, the orbit space $\mu^{-1}(\lambda)/S^1$, which we also call $M_{\lambda}$, can be given a smooth structure such that the symplectic form on $M$ descends to a symplectic form on the four-dimensional manifold $M_{\lambda}$.
    The case in which the fixed points at $\lambda$ are isolated is proven in \cite[Section 3.2]{Mc09}. The case in which there are also fixed surfaces 
    at $\lambda$ is in \cite[Section 3.3.1]{Go11}.\\
     If $\lambda$ is an extremal critical value, then $M_{\lambda}:=\mu^{-1}(\lambda)/S^1$ coincides with the fixed point set $F$ at level $\lambda$. The symplectic form $\omega_{\lambda}$ is then the restriction of the symplectic form on $M$ to $F$.\\
 We endow the manifold $M_t$ with the orientation induced by the symplectic form $\omega_t$, for $t$ regular or critical.

\begin{noTitle}\label{nt:morse1}
     Let $\lambda$ be a critical value of $\mu$. Let $\eps>0$ be such that there is no critical value in $[\lambda-\eps,\lambda)$. The normalized flow $\Phi_t$ of the gradient vector field of $\mu$ with respect to some invariant metric gives an equivariant diffeomorphism
     \begin{equation} \label{eq:phiti}
     \mu^{-1}(\lambda-\eps)\times [\lambda-\eps,\lambda)\to \mu^{-1}([\lambda-\eps,\lambda)), \quad (p,t)\mapsto \Phi_t(p)
     \end{equation}
     under which $\mu$ pulls back to $
     (p,t)\mapsto t.$ Choosing a different invariant metric does not change the equivariant isotopy type of this equivariant diffeomorphism.
\end{noTitle}

   Let $I=[t_0,t_1]\subset \mu(M)$ be an interval of regular values. Using the normalized gradient flow of $\mu$ w.r.t.\ some invariant metric, we obtain a smooth family of diffeomorphisms $M_{t_0}\cong M_t$, $t\in I$, and use this family to view all reduced forms $\omega_t$ on $M_t$ as symplectic forms on $M_{t_0}$.
    \begin{definition}\label{def:rigidityassumption}
        We say that $M$ satisfies the \textbf{rigidity assumption} if for all closed intervals $I=[t_0,t_1]\subset \R$ of regular values, $(M_{t_0},\{\omega_t\}_{t \in I})$ is rigid in the sense of \Cref{def:rigid} below.
    \end{definition}
     
     \begin{definition}\label{def:rigid}\cite[Definition 1.4]{Go11}.
        Let $B$ be a smooth manifold and $\{\omega_t\}$ a smooth family of symplectic forms on $B$, parametrized by real values $t\in I=[t_0,t_1]$ in a closed interval. We say that $(B,\{\omega_t\})$ is \textbf{rigid} if
        \begin{itemize}
            \item Symp$(B,\omega_{t})\cap \text{Diff}_{0}(B)$ is path-connected for all $t\in I$, where $\text{Diff}_0(B)$ is the identity component of the diffeomorphism group of $B$.
            \item Any deformation between any two cohomologous symplectic forms  that are symplectic deformation equivalent to $\omega_{t_0}$
             on $B$ may be homotoped through symplectic deformations with fixed endpoints into an isotopy, i.e., a symplectic deformation through cohomologous forms.
        \end{itemize}
    \end{definition}
\subsection*{The Morse flow}
 Assume further that $M$ is of dimension six.

\begin{noTitle}\label{rem:criticalvalue}
Let $\lambda$ be a non-extremal critical value of $\mu$.
We extend the $S^1$-action near any fixed surface $C$ to an effective, symplectic $T^2$-action that fixes $C$. This can be easily done by letting an additional circle act fiberwise on the normal bundle of $C$ in $M$.
We fix an $S^1$-invariant metric on $M$ which is standard Euclidean in the local normal form around each isolated fixed point, and $T^2$-invariant near all fixed spheres.
Then, we have a well-defined, equivariant and continuous map
    \begin{equation} \label{eq:morse}
         f_{\Morse}=f_{\Morse}(\eps)\colon \mu^{-1}({\lambda-\eps})\to \mu^{-1}({\lambda}), \quad f_{\Morse}(p)=\lim\limits_{\delta \to \eps_-} \Phi_{\delta}(p),
    \end{equation}
    where $\Phi_{\delta}$ is the time-$\delta$-flow with respect to the normalized gradient vector field of $\mu$.
    See \cite[3.5]{KW25}.
    We call this map the {\bf Morse flow}. Also, due to its equivariance, $f_{\Morse}$ descends to a map $M_{\lambda-\eps}\to M_{\lambda}$ between orbit spaces; we also call this the Morse flow.\\
    For a fixed point of index $1$ considered to be in $M_{\lambda}$, the preimage under $f_{\Morse}$ in $M_{\lambda-\eps}$ is a point. For a fixed point $p$ of index $2$, the preimage is an embedded symplectic 2-sphere $S_{-\eps}$ of size $\eps$ and self intersection $-1$. The preimage under the orbit map $M\to M/S^1$ of the union of these $S_{-\eps}$ (for $\eps$ running over $(0,\delta)$, $\delta>0$ sufficiently small) together with $p$ form a smooth $T^2$-invariant submanifold of dimension $4$; we call the normal bundle of $p$ in that submanifold the \textbf{negative normal bundle}. Similarly, we define the positive normal bundle if $p$ has index $1$.\\
    The spheres corresponding to different fixed points of index $2$ are pairwise disjoint.
    More concretely, the Morse flow $M_{\lambda-\eps}\to M_{\lambda}$ is a blowup map of $M_{\lambda}$ at the isolated fixed points of index $2$ in the topological category  \cite[Claim 3.13]{KW25}.\\
    
    The preimage of a fixed surface $\Sigma$ is an embedded symplectic surface $\Sigma_{-\eps}$ of the same genus which is fixed by the remaining $T^2/S^1=S^1$-action on a neighbourhood of $\Sigma$ in $M_{\lambda-\eps}$. The preimage under the orbit map $M\to M/S^1$ of the union of these $\Sigma_{-\eps}$ (for $\eps$ running over $(0,\delta)$, $\delta>0$ sufficiently small) form a smooth $T^2$-invariant submanifold of dimension $4$; we call the normal bundle of $\Sigma$ in that submanifold the \textbf{negative normal bundle}. Similarly, we define the \textbf{positive normal bundle}. We define $e({\Sigma})_-$ resp. $e(\Sigma)_+$ to be the Euler class of the negative resp. positive normal bundle of $\Sigma$; note that $e(\Sigma)_-$ pulls back to the restriction of the Euler class of the principal $S^1$-bundle $S^1\to \mu^{-1}({\lambda-\eps})\to M_{\lambda-\eps}$ to $\Sigma_{-\eps}$ under $\Sigma_{-\eps}\to \Sigma$.
    \end{noTitle}

\begin{Notation}\label{not:fixedpoints}
    We denote by $F$ the set of fixed points in $M_{\lambda}$, by $F_{\iso}$ its subset of isolated fixed points, by $F_{\iso, 2}$ its subset of isolated fixed points of index $2$, and by $F_{\sph}$ the union of fixed spheres.\\
    We denote by $F'$, $F'_{\iso}$, $F'_{\iso,2}$ and $F'_{\sph}$ the preimages of $F$, $F_{\iso}$, $F_{\iso,2}$ and $F_{\sph}$ in $M_{\lambda-\eps}$ under $f_{\Morse}$.\\

    Similarly, we define $F'$, etc., for preimages of $F$, etc., in $M_{\lambda+\eps}$. In case it is unclear if $F'$, etc., is considered to be in $M_{\lambda-\eps}$ or $M_{\lambda+\eps}$, we write $F'_{\lambda-\eps}$, $F'_{\lambda+\eps}$, etc.
\end{Notation}

 \section{The canonical class of a connected component of the fixed point set}\label{sec:canonical}

In this section we prove \Cref{thm:Poincare=Canonical}.
 We start by modifying the pseudo momentum map near the fixed point set $M^{S^1}$, so that a given connected component of $M^{S^1}$ equals the fixed point set at some level of the modified map.
We first look at the local model. 

\begin{noTitle}
Let $\Sigma$ be a point or a sphere. Denote by $\underline{\C^k_i}$, $i=\pm 1$,
the sum of $k$ complex line bundles over $\Sigma$ on which $S^1$ acts fiberwise as $t\cdot (z_1,\hdots,z_k)=(t^i\cdot z_1,\hdots, t^i\cdot z_k)$.
 Let $$V'=\underline{\C^{k_1}_{-1}}\oplus \underline{\C^{k_2}_1}$$ for some nonnegative integers $k_1$ and $k_2$, and let $n=k_1+k_2$ be the complex dimension of the fiber. For later applications, we will have $k_1=k_2=1$ if $\Sigma$ is a sphere, and either $k_1=1,k_2=2$ or $k_1=2,k_2=1$ if $\Sigma$ is a point.\\
 The fiberwise $S^1$-action extends to a certain fiberwise $T^n$-action that we fix from now on.
  By a theorem of Thurston \cite{Th76}, there is closed $2$-form on the bundle that restricts to the standard symplectic form on each fiber. By averaging, we can assume that it is $T^n$-invariant. Furthermore, the form is non-degenerate on a small enough invariant neighbourhood $V$ of the $0$-section; see details in \cite[5.3]{KW25}.   Then, the map 
  \begin{equation}\label{eq:s2}
  \mu\colon V\to \R, \; \mu(v)=\mu(v_1,v_2)=-|v_1|^2+|v_2|^2
  \end{equation}
  is a momentum map for the action.
 Here we denote by $|\cdot|$ the standard norm on the fiber of $\underline{\C^{k_1}_{-1}}$ resp. $\underline{\C^{k_2}_{1}}$.
\end{noTitle}

\begin{lemma}\label{lem:modifiedmomentmap}
There is $\delta>0$ such that for each $0<\eps<\delta$ (alternatively, $-\delta<\eps<0$), there is a smooth, $T^n$-invariant map $\mu'\colon V\to \R$ with the following properties.
    \begin{itemize}
        \item The critical set of $\mu'$ is precisely the $0$-section $V_0$ of $V$.
        \item $\mu'(V_0)=\eps$.
        \item There is a neighborhood $U$ of $V_0$ in $V$ such that $\mu'(v)=\mu(v)+\eps$ for all $v\in U$.
        \item There is a closed tubular neighborhood $U'$ of $V_0$ such that $\mu=\mu'$ on $V\smallsetminus U'$. For any real value $0<r$, the diameter of $U'$  can be chosen to be smaller than $r$ if $\delta>0$ as above is chosen to be small enough.
    \end{itemize}
\end{lemma}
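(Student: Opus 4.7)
The plan is to modify $\mu$ by adding a small $T^n$-invariant bump supported in a tubular neighborhood of $V_0$. Fix once and for all a smooth, nonincreasing ``universal bump'' $\chi_0\colon[0,1]\to[0,1]$ with $\chi_0(0)=1$ and $\chi_0(1)=0$. For parameters $0<a<b$ to be tuned later, extend it to $\chi\colon[0,\infty)\to[0,1]$ by $\chi(s)=1$ for $s\le a$, $\chi(s)=\chi_0\!\left(\tfrac{s-a}{b-a}\right)$ for $s\in[a,b]$, and $\chi(s)=0$ for $s\ge b$. Since $|v_1|^2+|v_2|^2$ is invariant under the unitary fiberwise $T^n$-action, the function
\[
\mu'(v):=\mu(v)+\eps\,\chi\bigl(|v_1|^2+|v_2|^2\bigr)
\]
is $T^n$-invariant. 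By construction $\mu'=\mu+\eps$ on the neighborhood $U=\{|v_1|^2+|v_2|^2\le a\}$ of $V_0$ (so $\mu'(V_0)=\eps$), and $\mu'=\mu$ on the complement of the closed tubular neighborhood $U'=\{|v_1|^2+|v_2|^2\le b\}$.

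The main step is to confirm that the critical set of $\mu'$ is still exactly $V_0$. Writing $s=|v_1|^2+|v_2|^2$, a direct computation gives
\[
d\mu'=\bigl(-1+\eps\chi'(s)\bigr)\,d|v_1|^2+\bigl(1+\eps\chi'(s)\bigr)\,d|v_2|^2.
\]
Since $d|v_j|^2$ vanishes exactly when $v_j=0$, and $d|v_1|^2, d|v_2|^2$ are linearly independent on $\{v_1\ne 0,\,v_2\ne 0\}$, a critical point $v\notin V_0$ would force $\eps\chi'(s)=1$ when $v_1\ne 0$ and $\eps\chi'(s)=-1$ when $v_2\ne 0$. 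For $\eps>0$ the first equality is excluded by $\chi'\le 0$, and the second is excluded as soon as $\|\chi'\|_\infty<1/\eps$; since $\|\chi'\|_\infty=\|\chi_0'\|_\infty/(b-a)$, this is equivalent to $\eps<(b-a)/\|\chi_0'\|_\infty$. The case $-\delta<\eps<0$ is entirely analogous, with the roles of the two equalities swapped. The inclusion $V_0\subset\{d\mu'=0\}$ is immediate because $d|v_1|^2$ and $d|v_2|^2$ both vanish on $V_0$.

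For the diameter bound, given $r>0$ first choose $b>0$ small enough that $U'=\{|v_1|^2+|v_2|^2\le b\}$ is contained in the open neighborhood $V$ on which the local model is defined and has fiber diameter less than $r$; this is possible because $|v|^2$ is a continuous nonnegative function vanishing precisely on $V_0$ and $\Sigma$ is compact. Then pick any $a\in(0,b)$ and set $\delta:=(b-a)/\|\chi_0'\|_\infty$. For every $\eps\in(0,\delta)$ (resp.\ $\eps\in(-\delta,0)$) the function $\mu'$ constructed above satisfies all four bullets of the lemma. The critical-set condition is the only nontrivial point; all other properties are built into the construction, while this one forces the coupling between $\delta$ and $r$ just described and is the main obstacle in the argument.
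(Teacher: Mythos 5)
Your construction is essentially the paper's: both add $\eps$ times a radial cutoff in $|v_1|^2+|v_2|^2$ supported in a small tubular neighborhood, and the only nontrivial point is the critical set, which the paper handles by a norm estimate ($|d\mu|\ge K$ outside $U$, $|d\rho_r|\le K'$, take $\delta<K/K'$) while you do the equivalent explicit case analysis on the coefficients of $d|v_1|^2$ and $d|v_2|^2$, yielding the explicit bound $\delta=(b-a)/\|\chi_0'\|_\infty$. The argument is correct; just take $\chi_0$ constant near $0$ and $1$ (or with all derivatives vanishing there) so that the piecewise extension $\chi$ is actually smooth at $s=a$ and $s=b$.
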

 \begin{proof}
 We only need to show this for $0<\eps<\delta$. For a fixed $r>0$, let $\rho_r\colon [0,\infty)\to [0,1]$ be a smooth function such that $\rho_{r}$ is identically $1$ near $0$, identically $0$ on $[r,\infty)$, and decreasing. Define, for $\eps>0$,
     \[
     \mu_{\eps,r}'(v_1,v_2)=\mu(v_1,v_2)+\eps \rho_r(|v_1|^2+|v_2|^2)=(-|v_1|^2+|v_1|^2)+\eps\rho_r(|v_1|^2+|v_2|^2).
     \]
     It is clear that $\mu_{\eps,r}'=\mu+\eps$ near $V_0$ and $\mu_{\eps,r}'=\mu$ outside $U'=\{(v_1,v_2)\in V\colon |v_1|^2+|v_2|^2\leq r\}$. It is left to check that there is $\delta>0$ such that the critical set of $\mu'_{\eps,r}$ is $V_0$ for all $0<\eps<\delta$.\\

     Again, it is clear that this is always the case for the critical set of $\mu'_{\eps}$ in $U$, since there $\mu'=\mu+\eps$. Outside $U$, however, $|d\mu|$ is bounded from below by a constant $K>0$, and
     $d(\rho_{r}(|v_1|^2+|v_2|^2))$ is bounded from above by a constant $K'$ since $\rho_{r}$ has compact support. Hence, there is $\delta>0$ such that $K>\delta K'$, and we have
     $$|d\mu_{\eps,r}'|\geq |d\mu|-\eps |d(\rho_{r}(|v_1|^2+|v_2|^2))|\geq K-\eps K'\geq K-\delta K'>0$$
     outside $U$. This finishes the proof.
 \end{proof}

By the local normal form theorem for a semi-free Hamiltonian $S^1$-action on $(M^6,\omega)$ and by Weinstein's tubular neighbourhood theorem \cite{We71} (or \cite[Theorem 5.4]{KW25} for a concrete version), \Cref{lem:modifiedmomentmap} holds in a small enough $S^1$-invariant neighbourhood of an isolated fixed point or a fixed sphere.\\
 The next lemma implies that if the complement of a fixed component in a reduced space $M_{\lambda}=\mu^{-1}(\lambda)/S^1$ has cyclic fundamental group, then the same holds for a modification of $\mu$ as in \Cref{lem:modifiedmomentmap}.
 
 \begin{lemma}\label{lem:fundamentalgroup}
  Let $M$ be a semi-free Hamiltonian $S^1$-manifold of dimension $6$ and $\mu\colon M\to S^1$ a proper, pseudo momentum map. 
Let $\lambda$ be a non-extremal critical value.
Let $C$ be either an isolated fixed point or a fixed sphere in $\mu^{-1}(\lambda)$, and let $V$ be a tubular neighborhood of $C$ in $M$. Then, for $\eps>0$ (alternatively, $\eps<0$), $U$, $U'$ sufficiently small, there is $\mu'$ as in \Cref{lem:modifiedmomentmap} such that
     \begin{itemize}
         \item the complement of $C$ in $\mu^{-1}(\lambda)/S^1$ has the same fundamental group as the complement of $C$ in $(\mu')^{-1}(\lambda+\eps)/S^1$.         
         \item the complement of any fixed sphere $S$ in $(\mu')^{-1}(\lambda)/S^1$ has the same fundamental group as the complement of $S$ in $\mu^{-1}(\lambda)/S^1$.
     \end{itemize}
 \end{lemma}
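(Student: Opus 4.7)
The idea is that the modification $\mu \to \mu'$ from \Cref{lem:modifiedmomentmap} is localized to the tubular neighborhood $U'$ of $C$ and equals $\mu + \eps$ on the smaller neighborhood $U \subset U'$. Therefore $\mu^{-1}(t)/S^1$ and $(\mu')^{-1}(t)/S^1$ agree outside the image of $U'$, so all discrepancies between them are confined to a neighborhood of $C$. We choose $U, U', \eps$ small enough that no other critical component of $\mu$ meets $U'$ and that $\lambda + \eps$ is not a critical value of $\mu$ outside $U'$.

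For the first bullet, the key identity $(\mu')^{-1}(\lambda + \eps) \cap U = \mu^{-1}(\lambda) \cap U$ is immediate from $\mu' = \mu + \eps$ on $U$. In particular, a neighborhood of $C$ is literally the same in both reduced spaces, coming from the local normal form. Outside $U'$, the level sets $(\mu')^{-1}(\lambda + \eps) \setminus U' = \mu^{-1}(\lambda + \eps) \setminus U'$ and $\mu^{-1}(\lambda) \setminus U'$ are different level sets of the same function $\mu$, related for small $\eps$ by the normalized gradient flow of $\mu$. This flow descends to a homeomorphism between the corresponding reduced spaces, using the smooth $4$-manifold structure at critical values \cite[Section 3.3.1]{Go11} to handle potential passage through fixed components at level $\lambda$ other than $C$. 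Interpolating between the identity inside $U$ and the descended flow outside $U'$ across the transition shell $U' \setminus U$ gives a homeomorphism of reduced spaces that fixes $C$ pointwise, whence the fundamental groups of the complements of $C$ coincide.

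For the second bullet, we further arrange that $U'$ is small enough that all fixed components of $\mu$ at level $\lambda$ other than $C$ are disjoint from $U'$; in particular, $S \subset M \setminus U'$. Since $\mu = \mu'$ outside $U'$, the two reduced spaces $\mu^{-1}(\lambda)/S^1$ and $(\mu')^{-1}(\lambda)/S^1$ agree outside $U'/S^1$, so the complements of $S$ differ only inside $U'/S^1$, with matching boundary in $\partial U'/S^1$ since $\mu = \mu'$ on $\partial U'$. By van Kampen, it suffices to verify that both interior pieces inside $U'/S^1$ are simply-connected. The piece $\mu^{-1}(\lambda)/S^1 \cap U'/S^1$ deformation retracts onto the disk bundle over $C$ (or the cone on $S^3$ if $C$ is a point), which is simply-connected. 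The piece $(\mu')^{-1}(\lambda)/S^1 \cap U'/S^1$ agrees in $U$ with the regular reduced space $\mu^{-1}(\lambda - \eps)/S^1$, and direct inspection of the local normal form of \Cref{rem:criticalvalue} shows it deformation retracts onto either a $4$-ball, an $\mathcal{O}(-1)$-disk-bundle over $\CP^1$, or a $D^2$-bundle over $C$, depending on the type of $C$, each simply-connected.

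The main obstacle will be the patching in the first bullet when additional fixed components of $\mu$ exist at level $\lambda$: the gradient flow of $\mu$ from level $\lambda + \eps$ to level $\lambda$ is not smoothly defined across such components. We handle this by working on reduced spaces, where the corresponding Morse flow is a continuous topological blow-down \cite[Claim 3.13]{KW25}; we promote it to a homeomorphism using the smoothness of reduced spaces at critical levels and arrange compatibility with the identification already fixed inside $U$.
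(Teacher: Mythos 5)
Your argument for the first bullet contains a genuine error. You claim that the Morse-type comparison outside $U'$ can be ``promoted to a homeomorphism'' and hence that there is a homeomorphism of reduced spaces $(\mu')^{-1}(\lambda+\eps)/S^1\to \mu^{-1}(\lambda)/S^1$ fixing $C$. This is false in general: if level $\lambda$ contains an isolated fixed point $p$ of index $1$ away from $V$ (local weights $-1,+1,+1$), then near $p$ the reduced space of $\mu$ at level $\lambda+\eps$ is an $\mathcal{O}(-1)$-disk bundle over a sphere, while $\mu^{-1}(\lambda)/S^1$ near $p$ is a ball; globally $(\mu')^{-1}(\lambda+\eps)/S^1$ is a topological blowup of $\mu^{-1}(\lambda)/S^1$ at all such points, so the two spaces have different second Betti numbers and are not homeomorphic at all, let alone by a map fixing $C$. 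A blow-down cannot be promoted to a homeomorphism; what saves the statement is only that blowing down at points disjoint from $C$ does not change $\pi_1$ of the complement of $C$. This is exactly how the paper argues: it compares both critical reduced spaces to a common regular level $M_t$, $t>\lambda+\eps$, via the Morse flows of $\mu$ and of $\mu'$ (each a topological blow-down away from $C$, hence a $\pi_1$-isomorphism on complements), and uses the local $T^2$-action of \Cref{rem:criticalvalue} to see that the $\mu'$-Morse flow carries $C'=f_{\Morse}^{-1}(C)$ onto $C$. Note also that your ``interpolation across the transition shell $U'\smallsetminus U$'' between the identity inside $U$ and the descended flow outside $U'$ is asserted rather than constructed; the paper's detour through a common regular level avoids having to build any such patching.

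For the second bullet your route is genuinely different from the paper's and is essentially viable: since $S$ and the other fixed components lie outside $U'$, the two reduced spaces at level $\lambda$ literally agree outside $U'/S^1$, and a van Kampen argument with simply-connected inner pieces (cone on $S^3$, respectively a disk bundle over the sphere $C$, respectively the regular local slice) determines $\pi_1$ of both complements by the same data. The paper instead runs the Morse-flow comparison again and checks $(f'_{\Morse})^{-1}(S)=S'$ by local $T^2$-equivariance. To make your version complete you should check that the overlap $(\mu^{-1}(\lambda)\cap\partial U')/S^1$ is connected and actually exhibit the deformation retraction of $((\mu')^{-1}(\lambda)\cap U')/S^1$ onto its part over $U$ (the shell is where $\mu'$ is only an interpolation, so ``direct inspection of the local normal form'' does not yet cover it).
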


 We write $M_{t}$ for the reduced space with respect to $\mu$, and $(\mu')^{-1}(t)/S^1$ for the reduced space with respect to $\mu'$. Let $U,U'$ be as in \Cref{lem:modifiedmomentmap}.
 \begin{proof}
 Without loss of generality, assume that $\eps>0$.
 The first statement is obvious if $C$ is a point, so assume $C$ is a sphere. Since the Morse flow $f_{\Morse}\colon M_{t}\to M_{\lambda}$ is a topological blow up  at the isolated fixed points of index $1$ for all $t$ right above $\lambda$, the complement of $C'=f_{\Morse}^{-1}(C)$ in $M_{t}$ has the same fundamental group as the complement of $C$ in $M_{\lambda}$. We thus only have to show that the complement of $C'$ in $M_{t}$ has the same fundamental group as the complement of $C$ in $(\mu')^{-1}(\lambda+\eps)/S^1$.\\
 
 If $U'$, $\eps$ are small enough, we can choose $t>\lambda+\eps$ in such a way that there is no $\mu$-critical value in $(\lambda,t]$ and $C'\subset M_t$ is contained in $(V\smallsetminus U')/S^1$. We claim that the Morse flow $f_{\Morse}'\colon \mu^{-1}(t)/S^1=(\mu')^{-1}(t)/S^1\to (\mu')^{-1}(\lambda+\eps)/S^1$ of $\mu'$ sends $C'$ into $C$. Indeed, the local fiberwise $T^2$-action on $V$, as in \Cref{rem:criticalvalue}, descends to a circle action on  a neighbourhood of $C'$ in  $M_t$ for which $C'$ is a  connected component of the fixed point set. Similarly, $C$ is the only connected component of the fixed point set w.r.t. the $T^2/S^1$-action in $(\mu^{-1}(\lambda)\cap V)/S^1$ and  the only connected component of the $T^2/S^1$-fixed set in $((\mu')^{-1}(\lambda+\eps)\cap V)/S^1$. It follows that $f_{\Morse}'$ maps $C'$ into $C$ due to equivariance of  $f_{\Morse}'$ with respect to this $T^2$-action.\\
 Thus, we obtain a homeomorphism $M_t\to (\mu')^{-1}(\lambda+\eps)/S^1$ sending $C'$ homeomorphically into $C$, which shows the claim.\\

 Now let $S$ be a fixed sphere in $(\mu')^{-1}(\lambda)/S^1$, so that in particular $S$ is a fixed sphere in $M_{\lambda}$. Then the fundamental group of the complement of $S$ in $M_{\lambda}$ is the same as the fundamental group of the complement of $S'={f_{\Morse}}^{-1}(S)\subset M_t$ in $M_t$, for all $t>\lambda$ such that there is no $\mu$-critical value in $(\lambda,t]$. The same holds for the preimage $(f_{\Morse}')^{-1}(S)$ of $S$ under the Morse flow $f_{\Morse}'\colon \mu^{-1}(t)/S^1=(\mu')^{-1}(t)/S^1\to (\mu')^{-1}(\lambda)/S^1$ of $\mu'$. \\
 
 Again, if we choose $t$ such that $S'\subset M_t$ is contained in $(V\smallsetminus U')/S^1$, the Morse flow $f_{\Morse}'\colon \mu^{-1}(t)/S^1=(\mu')^{-1}(t)/S^1\to (\mu')^{-1}(\lambda)/S^1$ sends $S'$ into $S$ due to local $T^2$-equivariance, so that $(f_{\Morse}')^{-1}(S)=S'$. This shows the last assertion and thus the whole lemma.
 \end{proof}
 In order to show the second part of \Cref{thm:Poincare=Canonical}, we must make use of the assumption that the canonical class $c$ vanishes on $M^{>\lambda}$; therefore, we need to have a way to relate  $M^{\lambda-r}$ and $M^{\lambda+r}$ for any $r>0$ such that only $\lambda$  is critical in $[\lambda-r,\lambda+r]$. This can be done using the Morse flow.\\
Indeed, by the definition of $f_{\Morse}$, it induces the homeomorphism
    $$f_{\Morse}\colon M_{\lambda-r}\smallsetminus F'_{\iso}\to M_{\lambda}\smallsetminus F_{\iso}$$
    and, similarly,  the homeomorphism 
    $$f_{\Morse}\colon M_{\lambda+r}\smallsetminus F'_{\iso}\to M_{\lambda}\smallsetminus F_{\iso}.$$
    These combine to a homeomorphism
    \begin{equation}\label{eq:homeo}
        h\colon M_{\lambda+r}\smallsetminus F_{\iso}'\to M_{\lambda-r}\smallsetminus F_{\iso}'.
    \end{equation}
    In the same fashion, there is a diffeomorphism
    \begin{equation}\label{eq:diffeo}
        g\colon M_{\lambda+r}\smallsetminus F'\to M_{\lambda-r}\smallsetminus F'
    \end{equation}
    that comes from the equivariant flow of the normalized gradient vector field in $M$; in particular, $c$ vanishes on $M_{\lambda-r}\smallsetminus F'$ because it does on $M_{\lambda+r}$.
    Therefore, for an immersed sphere contained in $M_{\lambda-r}\smallsetminus F'$, we already know that $c$ evaluates to $0$ on that sphere.
   Note that this statement does not hold for $M_{\lambda-r}\smallsetminus F_{\iso}'$ because \cref{eq:homeo} does \textit{not} come from an equivariant flow on $M$.\\
    Of course, not every homology class of $H_2(M_{\lambda-r})$ can be represented by a sphere in $M_{\lambda-r}\smallsetminus F'$. So the main part of the proof will be to decompose a given class $[A]\in H_2(M_{\lambda-r})$, considered as a class in $H_2((\mu^{-1}([\lambda-r,\lambda+r])\setminus F)/S^1)$, as $[A_1]+[A_2]$, where $[A_2]$ can be represented as a sphere in $M_{\lambda+r}\smallsetminus F'$, and $[A_1]$ can be written as the sum of homology classes corresponding to spheres that are in (the orbit space of) a tubular neighborhood of $F$. That way, we can compute $c(A_1+A_2)=c(A_1)$ because we know to what $c$ restricts  on neighborhoods of $F$.
 \begin{remark}\label{rem:decompsehomr}
    For $\lambda\neq t\in (\lambda-\eps,\lambda+\eps)$, we denote by $U_t$ a closed tubular neighborhood of $(F')^{\iso}_{t}$. We have
       \begin{equation} \label{eq:mv}
    H_2(M_{t})\cong H_2(U_{t})\oplus H_2(M_{t}\smallsetminus (F')^{\iso}_t).
    \end{equation}
Indeed, there is the Mayer-Vietoris sequence
    \[
    \hdots \to H_2(\partial U_{t})\to H_2(U_{t})\oplus H_2(M_{t}\smallsetminus (F')^{\iso}_t)\to H_2(M_{t})
    \overset{\partial_1}{\to} H_1(\partial U_{t})\to \hdots
    \]
    Note that $\partial U_{t}$ is diffeomorphic to a union of 3-spheres, so $H_1(\partial U_{t})=H_2(\partial U_{t})=0$. Equation \eqref{eq:mv} follows.
 \end{remark}
\begin{proof}[Proof of \Cref{thm:Poincare=Canonical}]
Let $C$ be a connected component of the fixed point set of $M$ at  a non-extremal critical level $\lambda$ of $\mu$ that is either a sphere or a point of index $2$.
Let $r>0$ such that $\lambda$ is the only critical value in $(\lambda-r,\lambda+r)$. 
In order to apply \cite[Proposition 2.1]{TW99}, stated in \Cref{prop:prop2.1}, we modify $\mu$ such that $C$ becomes the only connected component at its level.
We apply \Cref{lem:modifiedmomentmap} on a local model $V$ near $C$ to obtain for $\frac{r}{2}>\eps>0$ arbitrarily small a new map $\mu'$ such that 
\begin{itemize}
    \item the sets of critical points of $\mu$ and of $\mu'$ are equal;
    \item $\mu'=\mu$ on $\mu^{-1}((-\infty,\lambda-2\eps])$, $\mu^{-1}([\lambda+r,\infty))$, and $F\smallsetminus C$; and 
    \item $\mu'(C)=\mu(C)-\eps$.
\end{itemize}
The notations $M_t$, $F$, $F'$, etc. are still with respect to $\mu$.\\
By applying \Cref{prop:prop2.1}
to $\mu'$, we obtain a unique class $c$ that vanishes on $(\mu')^{-1}((\lambda-\eps),\infty)$ and restricts on $C$ to the equivariant Euler class of the positive normal bundle. Therefore, $c$ vanishes on $M^{>\lambda}\cap M^{S^1}$, which implies that it vanishes on $M^{>\lambda}$ by Kirwan injectivity, as well as on $F\smallsetminus C$. This proves the existence of a canonical class $c$ of $C$.\\

Let us now show that
    \begin{equation}\label{eq:toShow2}
        c([S])=([C']^*\cup [S]^*)([M_{\lambda-r}]),
    \end{equation}
    where $S\subset M_{\lambda-r}$ is an immersed sphere and $[C']^*$ and $[S]^*$ are the Poincaré duals of $[C']$ and $[S]$ w.r.t. the fundamental class $[M_{\lambda-r}]$.\\
Denoting by $U_{\lambda-r}$ a closed tubular neighborhood of $(F')^{\iso}_{\lambda-r}$  in $M_{\lambda-r}$, we use \cref{eq:mv} to write $[S]\in H_2(M_{\lambda-r})$ as $\tilde{A}_1\oplus [\tilde{S}_2]$ with 
$\tilde{A}_1\in H_2(U_{\lambda-r})$ and $\tilde{S}_2\subset M_{\lambda-r}\smallsetminus (F')^{\iso}_{\lambda-r}$  an immeresed sphere. Since \cref{eq:toShow2} is linear in $[S]$, it is enough to check the equality for $\tilde{A}_1$ and for $[\tilde{S}_2]$ separately.\\
    
    For $0\neq \tilde{A}_1\in H_2(U_{\lambda-r})$, we know  that $\tilde{A}_1$ is a sum of homology classes represented by immersed spheres, each contained in a single connected component of $U_{\lambda-r}$. Therefore, because of linearity, we might as well assume that $\tilde{A}_1=[\tilde{C}']$ for $\tilde{C}'$ that is a connected component of $(F')^{\iso}_{\lambda-r}$. We call the corresponding fixed point component $\tilde{C}$. Now and later we will use the fundamental property that if any two classes $[B_1]$ and $[B_2]$ are represented by immersed spheres $B_1,B_2$ in $M_{\lambda-r}$ that are transverse to each other, then we have
    \begin{equation}\label{eq:intersectionnumber}
        ([B_1]^*\cup [B_2]^*)([M_{\lambda-r}])=B_1\cdot B_2,
    \end{equation}
    where $B_1\cdot B_2$ is the intersection number of $B_1$ and $B_2$ (see \cite[Section 3.1 and 3.2]{Sc05}):
    \begin{itemize}
        \item If $\tilde{C}' \neq C'$, we have $c(\tilde{C}')=0$ because $c$ vanishes on all fixed point components at $\lambda$ that do not equal $C$, and the preimage of $\tilde{C}'$ under the orbit map $\mu^{-1}(\lambda-r)\to M_{\lambda-r}$ is equivariantly homotopic to $\tilde{C}$ under the Morse flow. On the other hand, we have $C'\cdot \tilde{C}'=0$ because $C'$ and $\tilde{C}'$ are disjoint.
        \item If $\tilde{S}_1=C'$, then $C$ is a fixed point of index $2$ and $C'$ is a sphere of self intersection $-1$, and we apply \Cref{lem:special1}.
    \end{itemize} 
    Let $[S]:=[\tilde{S}_2]\neq 0$ be in $H_2(M_{\lambda-r}\smallsetminus (F')^{\iso}_{\lambda-r})$. We will consider the class $[S]$ to be in $H_2((\mu^{-1}([\lambda-r,\lambda+r])\setminus F)/S^1)$ under the inclusion $M_{\lambda-r}\hookrightarrow (\mu^{-1}([\lambda-r,\lambda+r])\setminus F)/S^1$. We will show that it is possible to decompose it as $[S_1]+[S_2]$, where
    \begin{itemize}
        \item $S_2$ is an immersed sphere in $M_{\lambda+r}$, and
        \item $[S_1]$ is a class that can be written as the image of a union of immersed spheres that are contained in a tubular neighborhood around $F$.
    \end{itemize}
    That way, we will have $c([S])=c([S_1])+c([S_2])=c([S_1])$, which will allow us to reduce the claim to \Cref{lem:localcomputation}.\\
  We may assume that $S$ is transverse to $(F')^{\sph}_{\lambda-r}$  and that the intersection points are pairwise different (see \cite[Theorem 2.4]{MH76}). For each intersection point $p'$ of $S$ and a component $\tilde{C}(p')$ of $(F')^{\sph}_{\lambda-r}$,  we consider
    \begin{itemize}
        \item the point $p=f_{\Morse}(p')\in F$,
        \item the corresponding fixed sphere $C(p)$, and 
        \item the \textbf{orbit space} $U(p)$ of a closed equivariant local model around $p$, containing precisely $p'$ from all the intersection points.  The orbit space $U(p)$ is given by a ball inside $\C_{-1}\oplus \C_1\oplus \C_0$ around $0$, where the index at the $\C$-summands is according to the weight of the $S^1$-action. 
    \end{itemize}
    We may choose a parametrization of $S$ near $p'$, that is a smooth injective map $$f_p\colon D^2\to S$$ such that $\Phi_T$, the normalized gradient flow of time $T$, sends  $f_p(D^2)\setminus \{p'\}$ into $U(p)$ whenever $T\leq 2r$. That is because we can choose any parametrization $f_p$ first and restrict it to a sufficiently small subdisk of $D^2$, if necessary.\\
    From now on, for a parametrization $f\colon S^2\to S$ of $S$, we view $D^2$ as embedded in $S^2$ such that $f_p=f$ on $D^2$.\\
    We now modify $f\colon S^2\to S\subset (\mu^{-1}([\lambda-r,\lambda+r])\setminus F)/S^1$.
    We decompose $S^2$ as
    $$(S^2 \smallsetminus D^2) \cup \{q \in D^2\,|\, ||q||\leq 1/2\} \cup \{q \in D^2\,|\, ||q||\geq 1/2\},$$
    where $D^2$ is considered as a unit disk.
    We identify the annulus $ \{q \in D^2\,|\, ||q||\geq 1/2\}$ as $S^1\times [1/2,1]\cong S^1\times [0,4]$.
  Define $$g_p \colon D^2 \to \mu^{-1}([\lambda-r,\lambda+r])\setminus F)/S^1$$ and $$g \colon S^2\smallsetminus D^2 \to\mu^{-1}([\lambda-r,\lambda+r])\setminus F)/S^1$$ as follows:
  \begin{itemize}
\item For $q \in S^2\smallsetminus (D^2\smallsetminus \partial D^2)$, set $g(q)=f(q)$ (the bottom part of the black line in \cref{fig:visualizationhomotopy} on the right picture).
\item For $q \in D^2$ with $||q|| \leq 1/2$, set $g_{p}(q)=f_p(2q)$ (the top part of the black line in \cref{fig:visualizationhomotopy} on the right picture).
\item For $q=(z,t)\in S^1 \times [0,4] (\cong S^1\times [1/2,1])$ with $t \leq 1$, set $g_p(z,t)=\Phi_{2tr}(f_p(z,0))$, where $\Phi_{2tr}\colon M_{\lambda-r}\setminus F'_{\lambda-r}\to M_{\lambda-r+2tr}$ is the normalized gradient flow (the red dotted line in \cref{fig:visualizationhomotopy} on the right picture, going rightwards).
 \item For $q=(z,t)\in S^1\times [0,4]$ with $1\leq t\leq 2$, set $g_p(z,t)=H(z,t)$, where $$H\colon S^1\times [1,2]\to U(p)\cap M_{\lambda+r}$$ is a homotopy such that $H(z,1)=g_p(z,1)$ and $H(z,2)=pt.\in U(p)\cap (F'_{\sph})_{\lambda+r}$ (the blue dashed line in \cref{fig:visualizationhomotopy} on the right picture, going upwards). 
        \item For $q=(z,t)\in S^1\times [0,4]$ with $2\leq t\leq 3$, we simply reverse $H$, that is, we set $g_p(z,t)=g_p(z,4-t)$ (the blue dashed line in \cref{fig:visualizationhomotopy} on the right picture, going downwards).
        \item For a point $q=(z,t)\in S^1\times [0,4]$ with $3\leq t\leq 4$, we simply reverse the normalized gradient flow, that is, we set $g_p(z,t)=g_p(z,4-t)$ (the red dotted line in \cref{fig:visualizationhomotopy} on the right picture, going leftwards).
        \end{itemize}
   
    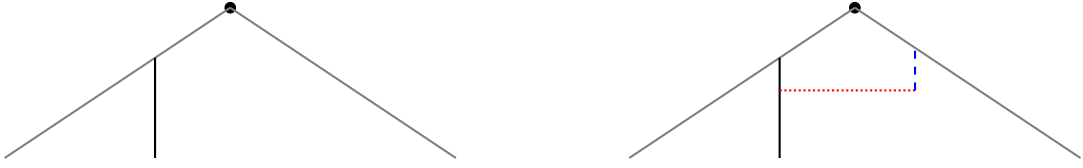
\begin{figure}
        \centering
        \hspace{-1cm}
        \begin{tikzpicture}
                \filldraw[black] (0,0) circle (2pt);
                \draw[gray, thick] (0,0) -- (3,-2);
                \draw[gray, thick] (0,0) -- (-3,-2);
                \draw[black, thick] (-1,-0.666) -- (-1,-2);
 \end{tikzpicture}
 \hspace{2cm}
 \begin{tikzpicture}
                \filldraw[black] (0,0) circle (2pt);
                \draw[gray, thick] (0,0) -- (3,-2);
                \draw[gray, thick] (0,0) -- (-3,-2);
                \draw[black, thick] (-1,-0.666) -- (-1,-1);
                \draw[black, thick] (-1,-1) -- (-1,-2);
                \draw[red, thick, densely dotted] (-1,-1.1) -- (0.8,-1.1);
                \draw[blue, thick, dashed] (0.8,-1.1) -- (0.8,-0.5666);
 \end{tikzpicture}
        \caption{A visualization of the homotopy from $f$ to $g$, the image has to be understood as the momentum map of a Hamiltonian $T^2$-action near $C$ that fixes $C$. On the left is (a part of) the initial map $f$, on the right (a part of) the map $g$.}
        \label{fig:visualizationhomotopy}
    \end{figure}
    By construction, $g$ and $g_p$ agree on $\partial D^2$, and therefore piece together to a continuous map $g\colon S^2\to (\mu^{-1}([\lambda-r,\lambda+r])\setminus F)/S^1$. This map is homotopic to $f$, because $g_p$ is homotopic rel $\partial D^2$ to $f_p$ by precomposing $g_p$ with a homotopy of $D^2$ that collapses a certain annulus to a circle. In particular, we have $$c([g(S^2)])=c([f(S^2)]).$$
    Further, since $g_p$ maps the circle $t=2$ in the annulus $D^2\supset S^1\times [0,4]$ as above to a point, the map $g$ factors through the canonical collapse of a circle: $S^2\to S^2\vee S^2$. Denote by $\tilde{g}_p$ the map $S^2\vee \{pt.\}\to U(p)\smallsetminus C(p)$ (whose image corresponds to the top solid line, the red dotted line going rightwards and the blue dashed line going upwards in the right picture of \cref{fig:visualizationhomotopy}) and by $\tilde{g}$ the map $\{pt.\}\vee S^2\to \mu^{-1}([\lambda-r,\lambda+r])\setminus F)/S^1$ (whose image corresponds to the blue dashed line going downwards, the red dotted line going leftwards and the bottom solid line in the right picture of \cref{fig:visualizationhomotopy}).
    \\
    Then we have $$c([f(S^2)])=c([g(S^2)])=c([\tilde{g}_p(S^2)])+c([\tilde{g}(S^2)]).$$ We may repeat the same argument for $\tilde{g}$ and the other points in the set $P_{\secc}$ of intersection points of $\tilde{g}(S^2)$ and $(F')^{\sph}_{\lambda-r}$. This gives the equation
    \[
    c([f(S^2)])=c([\tilde{g}(S^2)]+\sum\limits_{p\in P_{\secc}} c([\tilde{g}_p(S^2)]),
    \]
    where now $\tilde{g}(S^2)$ does not intersect $(F')^{\sph}_{\lambda-r'}$  at all, for any $r'>0$. Hence, using the normalized gradient flow, this sphere can be homotoped into $\mu^{-1}((\lambda,\lambda+r))/S^1$ through maps into $(\mu^{-1}([\lambda-r,\lambda+r])\smallsetminus F)/S^1$. So $c([\tilde{g}(S^2)])=0$. Using \Cref{lem:localcomputation}, the sum $\sum\limits_{p\in P_{\secc}} c([\tilde{g}_p(S^2)])$ is equal to the intersection number of $S$ with $C'$. This completes the proof of the theorem.

\end{proof}

 It remains to calculate the two hands of \cref{eq:toShow2} in two special cases of the theorem. First, we establish conventions regarding orientation.
Denote by $S^3_{-1,1}\subset \C_{-1}\oplus \C_{1}$, respectively $S^3_{1,1}\subset \C_1\oplus \C_1$,
the unit sphere equipped with the anti-diagonal, resp. diagonal, $S^1$-action.
Here, the index at the $\C$-summands denotes the weight of the $S^1$-action.  The orbit spaces of the actions are naturally equipped with an orientation $\mathcal{O}$, namely in such a way that any lift $X_1,X_2$ of an oriented normal frame on the base to $S^3$ satisfies that $(X_1,X_2,\xi)$ is positively oriented on $S^3$. Here $\xi$ is the canonical fundamental vector field of the $S^1$-action. Equivalently, this is the orientation coming from the symplectic form on these spheres, when they are being seen as reduced spaces of $\C_{-1}\oplus \C_{1}$ resp. $\C_{1}\oplus \C_{1}$ with the standard symplectic form at level $-r<0$ resp. $r>0$. That way, the descension of the equivariant diffeomorphism $S_{-1,1}^3\to S^3_{1,1}$ to the orbit spaces preserves the orientations.

\begin{lemma}\label{lem:special1}
    Let $V=\C_{-1}\oplus \C_{-1}\oplus \C_1$ be an $S^1$-representation, and let $S^3\subset \C_{-1}\oplus \C_{-1}\oplus \{0\}$ be the unit $3$-sphere, endowed with its standard orientation. Further, let $c\in H_{S^1}^2(0)$ be the first equivariant Chern class of the positive line bundle at $0\in V$, that is, $c$ is the Euler class of the line bundle $S^{\infty}\times_{S^1} \C_1$.\\
    Endow $S^2=S^3/S^1$ with the orientation $\mathcal{O}$, and denote by $[S^2]$ the corresponding fundamental class. Then under the restriction map $H_{S^1}^2(0)=H_{S^1}^2(V)\to H_{S^1}^2(S^3)\cong H^2(S^2)$, the image of $c$ evaluated on $[S^2]$ equals $-1$.
\end{lemma}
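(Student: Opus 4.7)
The plan is to reduce to the classical computation $\langle c_1(\mathcal{O}(-1)),[\C\PP^1]\rangle = -1$. The heart of the argument is the identification of the restriction of $c$ with $c_1(\mathcal{O}(-1))$ under the natural isomorphism $S^3/S^1 \cong \C\PP^1$; the remainder is a careful orientation check.

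First, since $V$ is $S^1$-equivariantly contractible to the origin, $c$ is represented by the trivial equivariant line bundle $V\times \C_1\to V$ (fiber weight $+1$), whose restriction to $S^3$ is $S^3\times \C_1$ with joint action $t\cdot((z_1,z_2),w)=((t^{-1}z_1,t^{-1}z_2),tw)$. Because $S^1$ acts freely on $S^3$, the canonical isomorphism $H^*_{S^1}(S^3)\cong H^*(S^3/S^1)$ carries the equivariant first Chern class of this bundle to the ordinary first Chern class of the quotient line bundle $L := S^3\times_{S^1}\C_1 \to S^3/S^1$. Identifying $S^3/S^1$ with $\C\PP^1$ via $(z_1,z_2)\mapsto[z_1{:}z_2]$, I will exhibit $L\cong \mathcal{O}(-1)$ through the fiberwise $\C$-linear map
\[
\Phi\colon S^3\times \C_1 \longrightarrow \mathcal{O}(-1)\subset \C\PP^1\times \C^2,\qquad ((z_1,z_2),w)\longmapsto \bigl([z_1{:}z_2],\,w\cdot(z_1,z_2)\bigr),
\]
whose $S^1$-invariance is the immediate cancellation $tw\cdot t^{-1}(z_1,z_2) = w(z_1,z_2)$, and which is fiberwise nonzero, hence an isomorphism.

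The only delicate step, and what I expect to be the main obstacle, is to verify that the orientation $\mathcal{O}$ on $S^3/S^1$ agrees with the complex orientation of $\C\PP^1$. A pointwise check at $[1{:}0]$ should suffice: at $(1,0)\in S^3$ the fundamental vector field of the weight $-1$ action is $\xi=-\partial_{y_1}$, the horizontal distribution (orthogonal to $\xi$ inside $TS^3$) is spanned by $(\partial_{x_2},\partial_{y_2})$, and these project under $d\pi$ to the complex-oriented basis $(\partial_x,\partial_y)$ in the affine chart $z_2/z_1$. Matching the defining condition for $\mathcal{O}$ against the standard orientation of $S^3$ as the boundary of the unit ball in $\C^2$, I verify that $(\partial_{x_2},\partial_{y_2},\xi)$ is positively oriented on $S^3$, so $(\partial_x,\partial_y)$ is $\mathcal{O}$-positive and $[S^2]_{\mathcal{O}}$ coincides with the complex fundamental class $[\C\PP^1]$. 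Combining the steps gives $\langle c,[S^2]_{\mathcal{O}}\rangle = \langle c_1(\mathcal{O}(-1)),[\C\PP^1]\rangle = -1$, as claimed.
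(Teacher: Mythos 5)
Your reduction of the lemma to the tautological-bundle computation is essentially the same cancellation the paper exploits, and your map $\Phi$ is fine; the problem is the orientation verification, which is precisely the content of the lemma, and it comes out with the wrong sign. At $p=(1,0)$ the outward normal is $\partial_{x_1}$, so the standard boundary orientation of $S^3$ at $p$ is given by the $3$-form $\iota_{\partial_{x_1}}(dx_1\wedge dy_1\wedge dx_2\wedge dy_2)=dy_1\wedge dx_2\wedge dy_2$. Evaluating it on $(\partial_{x_2},\partial_{y_2},\xi)$ with $\xi=-\partial_{y_1}$ gives $-\,(dy_1\wedge dx_2\wedge dy_2)(\partial_{x_2},\partial_{y_2},\partial_{y_1})=-1$, so this frame is \emph{negatively} oriented; your check would be correct for the diagonal weight $(1,1)$ action, where $\xi=+\partial_{y_1}$, but for the weight $(-1,-1)$ action of the lemma the fundamental field points the other way and the sign flips. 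Consequently, with the paper's definition of $\mathcal{O}$, the complex orientation of $\C\PP^1$ under $(z_1,z_2)\mapsto[z_1:z_2]$ is $\mathcal{O}$-negative, i.e.\ $[S^2]_{\mathcal{O}}=-[\C\PP^1]$. This is also what the paper's own proof uses: it transports $[S^2]$ via conjugation of \emph{both} coordinates, a map that preserves the standard orientation of $S^3$ and intertwines the fundamental vector fields (hence preserves the $\mathcal{O}$-orientations of the orbit spaces), but descends to complex conjugation on $\C\PP^1$, which reverses the complex orientation.

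With the orientation corrected, your chain of identifications yields $\langle c,[S^2]_{\mathcal{O}}\rangle=-\langle c_1(\mathcal{O}(-1)),[\C\PP^1]\rangle=+1$ rather than $-1$, so the proposal as written does not prove the lemma. The stated answer is reached only because the orientation error is compensated by a convention choice in forming $S^3\times_{S^1}\C_1$: you quotient $S^3\times\C_1$ by the simultaneous action, whereas the paper's concluding identification $[(z_1,z_2),v]\mapsto([z_1:z_2],v\cdot(z_1,z_2))$ over the diagonal sphere is well defined only for the identification $((z_1,z_2),v)\sim(t\cdot(z_1,z_2),t^{-1}v)$, which produces the dual line bundle; the two sign discrepancies cancel, but neither step is individually consistent with the paper's set-up. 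To repair the argument you must redo the pointwise orientation check and then pin down the convention defining $c$ (equivalently, which of $\mathcal{O}(\pm 1)$ the quotient bundle is) consistently with the rest of the paper, or follow the paper's route through the conjugation to the diagonal sphere, where the orientation transport and the identification with the tautological bundle are made explicit.
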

\begin{proof}
    By naturality, $c$ restricts to the Euler class of the bundle $S^3\times_{S^1} \C_1$ for $S^3$ endowed with the conjugate of the diagonal action. We have an isomorphism of bundles $$\phi\colon S^3\times_{S^1} \C_1\to S_{\diag}^3\times_{S^1} \C_1,$$ where $S^3_{\diag}$ is the unit $3$-sphere in $\C_1 \oplus \C_1$, given by
    \[
    [(z_1,z_2),v]\mapsto [(\overline{z}_1,\overline{z}_2),v].
    \]
    Again, view $S^2_{\diag}=S^3_{\diag}/S^1$ as a reduced space of $\C_1\oplus \C_1$ and denote by $[S_{\diag}^2]$ the fundamental class corresponding to this orientation. Then, since the restriction of $\phi$ to the $0$-section maps $[S^2]$ into $[S^2_{\diag}]$, we only have to show that the Euler class $e_{\diag}$ of $S_{\diag}^3\times_{S^1} \C_1$ evaluates to $-1$ on $[S_{\diag}^2]$. This holds since the latter bundle is the tautological bundle due to the embedding
    \[
    [(z_1,z_2),v]\mapsto ([z_1:z_2],v\cdot (z_1,z_2))\in \C \PP^1\times \C^2.
    \]
\end{proof}

\begin{lemma}\label{lem:localcomputation}
Let 
$M$, $\mu$, $\lambda$, $r$ 
be as in \Cref{not:canonical}, and let $C$ be a fixed spere at level $\lambda$.
   Let $p$ be a point in $C$, and $U=U(p)\cong (\C_{-1}\oplus \C_1)/S^1\times \C_0$ be the orbit space of an $S^1$-invariant, symplectic local model around $p$ in $M$. Denote by $C'$ the preimage of $C$ under $f_{\Morse}\colon M_{\lambda-r}\to M_{\lambda}$. By abuse of notation, we also write $C$ and $C'$ for the intersections of those with $U$, and let $g\colon S^2\to U(p)$ be a continuous embedding of a sphere such that
    \begin{itemize}
        \item $g(S^2)$ and $C'$ intersect transversely and exactly once;
        \item there is a closed tubular neighborhood $U_S\cong D^2$ of the south pole of $S^2$ such that $g(U_S)\subset M_{\lambda-r}$, and $g$ is smooth on $U_S$;
        \item there is a closed tubular neighborhood $U_N\cong D^2$ of the north pole of $S^2$ such that $g(U_N)\subset M_{\lambda+r}$;
        \item on $S^2\setminus (U_S\cup U_N)\cong S^1\times [0,2r]$, $g$ is given by $g(z,t)=\Phi_{t}(g(z,0))$, where $\Phi_t$ is the normalized gradient flow defined on a subset of $M_{\lambda-r}$.
    \end{itemize}
    When endowing $S^2$ with the orientation such that the intersection number of $g(U_S)$ with $C'$ with respect to the symplectic orientation in $M_{\lambda-r}$ equals $1$, we have $c([g(S^2)])=1$.
\end{lemma}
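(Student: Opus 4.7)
My plan is to reduce the computation to a topological identity in the equivariant Weinstein model $V = \C_{-1}\oplus\C_1\oplus\C_0$, where $p=0$, $C$ is the $\C_0$-axis, $\mu = -|v_1|^2+|v_2|^2$, and the positive normal direction at $p$ is $\C_1$.  Because $V$ is equivariantly contractible to $p$, we have $H^2_{S^1}(V)=H^2_{S^1}(\pt)=\Z\cdot t$, and the defining property $c|_C=e^+_{S^1}(C)$ forces $c|_V=c_1^{S^1}(\C_1)=t$.  Since the unique intersection $q_0$ of $g(U_S)$ with $C'$ lies in the interior of $U_S$, the sphere $g(S^2)$ avoids $C$, so I may restrict $c$ to the free $S^1$-manifold $V\setminus C$; there $H^*_{S^1}(V\setminus C)=H^*(U(p)\setminus C)$, and $c|_{V\setminus C}=t$ becomes the Euler class of the principal $S^1$-bundle $V\setminus C\to U(p)\setminus C$.

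The base $U(p)\setminus C$ deformation retracts onto the link $\Sigma:=\{(v_1,v_2,0):|v_1|^2+|v_2|^2=1\}/S^1$, so $H_2(U(p)\setminus C)$ and $H^2(U(p)\setminus C)$ are infinite cyclic, generated respectively by $[\Sigma]_{\mathcal O}$ and its dual.  I repeat the argument of \Cref{lem:special1}, but with the equivariant map $\phi(v_1,v_2)=(\bar v_1,v_2)$ from $S^3_{-1,1}$ to $S^3_{1,1}$; this $\phi$ is orientation-reversing on $S^3$ while still sending the fundamental vector field of the anti-diagonal action to that of the diagonal one, and therefore reverses $\mathcal O$ on the quotient.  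Consequently $c|_\Sigma$ is identified with the pullback by an orientation-reversing map of $c_1$ of the tautological bundle over $\CP^1$; since the latter evaluates to $-1$ on $[\CP^1]$, we obtain $c|_\Sigma([\Sigma]_{\mathcal O})=-(-1)=+1$.  Hence $c|_{U(p)\setminus C}$ is Poincar\'e dual to $[\Sigma]_{\mathcal O}$, and $c([g(S^2)])$ equals the linking number of $g(S^2)$ with $C$ inside the contractible space $U(p)$.

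To compute this linking number, I build a 3-chain bounding $g(S^2)$ in $U(p)$ by sweeping $g(U_S)$ upward along the normalized gradient flow: off $q_0$, every point flows cleanly to $M_{\lambda+r}$, producing a disk $D'_N\subset M_{\lambda+r}$, while the flow of $q_0$ itself converges asymptotically to a single point of $C\subset M_\lambda$, giving one transverse intersection of the (possibly singular) swept chain $D$ with $C$.  Since $D'_N$ and $g(U_N)$ are two disks with common boundary inside the contractible piece $M_{\lambda+r}\cap U(p)\cong \R^4$, they cobound a 3-chain $E\subset M_{\lambda+r}$ disjoint from $C$; then $D-E$ has boundary $g(S^2)$ and intersects $C$ exactly once, so the linking number is $\pm 1$.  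The main obstacle is the sign check: that the positivity hypothesis $g(U_S)\cdot C'=+1$ in $M_{\lambda-r}$, carried through the gradient-flow construction and through the orientation convention $\mathcal O$ on $\Sigma$, yields $+1$ rather than $-1$.  This reduces to verifying, in the explicit local model, that $\mathcal O$ on $\Sigma$ (defined via the symplectic form on the reduced space of $\C_{-1}\oplus\C_1$) is compatible with the symplectic orientation on $C\subset M_\lambda$ and with the ambient orientation on $U(p)$ in the expected way.  Once this orientation bookkeeping is done, combining the linking number $+1$ with $c|_\Sigma([\Sigma]_{\mathcal O})=+1$ yields $c([g(S^2)])=1$.
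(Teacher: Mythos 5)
Your computation of $c|_{\Sigma}([\Sigma]_{\mathcal O})=+1$ is correct, and it is essentially the paper's own key calculation (the analogue of \Cref{lem:special1} for the weight $(-1,1)$ model, with the one-coordinate conjugation reversing the orientation $\mathcal O$ and turning $-1$ into $+1$); your reduction of $|c([g(S^2)])|$ to $1$ via a flow-swept $3$-chain is also plausible. The genuine gap is that you never determine the sign of $[g(S^2)]$ in $H_2(U(p)\setminus C)\cong\Z\cdot[\Sigma]_{\mathcal O}$: you assert that the hypothesis $g(U_S)\cdot C'=+1$ in $M_{\lambda-r}$, ``carried through the gradient-flow construction,'' yields linking number $+1$ rather than $-1$, and you explicitly defer this to ``orientation bookkeeping.'' But the lemma is precisely a sign statement --- the value $\pm 1$ is the easy part --- and that bookkeeping is where all the content lies: the hypothesis is an intersection number of two symplectically oriented surfaces inside the $4$-manifold $M_{\lambda-r}$, whereas your linking number is the intersection of a $3$-chain with $C$ (which you must also orient) inside the $5$-dimensional orbit space $U(p)$ (whose orientation is itself a convention for an orbit space), and the identification of ``the class dual to $[\Sigma]_{\mathcal O}$'' with ``linking number with $C$'' is yet another sign-carrying claim you do not prove. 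The paper's proof consists mostly of exactly this comparison: it translates $S\cdot C'=1$ into the orientation behaviour of an explicit map $\partial U_S\to S^1$, fixes an oriented generator $G$ of $H_2(U\setminus C)$ via Mayer--Vietoris with the explicitly oriented circle $S_{\gen}$, and verifies $[g(S^2)]=G$ precisely under that hypothesis. Without an analogous explicit verification, your argument only gives $c([g(S^2)])=\pm 1$, which is not what \Cref{thm:Poincare=Canonical} needs.

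A secondary, fixable issue: your $3$-chain is the gradient-flow sweep of all of $g(U_S)$, but the lemma only guarantees that the boundary annulus of $g(S^2)$ consists of flow lines inside $U(p)$; interior points of $g(U_S)$ may flow out of the local model before reaching level $\lambda+r$, so the swept chain need not stay in the contractible set $U(p)$ where your linking-number argument takes place. You should either sweep only a small disk around the intersection point $q_0$ (handling the rest by the flow diffeomorphism away from $F'$), or enlarge the model, before invoking contractibility.
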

\begin{proof}
    The intersection number of $S$ with $C'$ is $1$ if and only if the parametrization
    \[
    g\colon U_S\cong D^2\to S \subset \{(z_1,z_2)\colon |z_1|=|z_2|+r\}/S^1\times \C_0
    \]
    composed with the projection
    \[
    \{(z_1,z_2)\colon |z_1|=|z_2|+r\}/S^1\times \C_0\to \{(z_1,z_2)\colon |z_1|=|z_2|+r\}/S^1\cong \C
    \]
    preserves orientation; otherwise it is $-1$. The composed map, in turn, preserves orientation if and only if the parametrization
    \[
    g\colon U_S\cong D^2\to f_{\Morse}^{\lambda-r}(S) \subset \{(z_1,z_2)\colon |z_1|=|z_2|\}/S^1\times \C_0
    \]
    composed with the projection
    \[
    \{(z_1,z_2)\colon |z_1|=|z_2|\}/S^1\times \C_0\to \{(z_1,z_2)\colon |z_1|=|z_2|\}/S^1\cong \C
    \]
    preserves orientation. 
    The outward pointing radial vector field outside $0$ is sent to an outward pointing vector field. 
    Therefore the orientation is preserved if and only if $\partial D^2\to \C\smallsetminus \{0\}\cong S^1\times (0,\infty)\to S^1$ preserves the standard orientations. 
    So we have that $g\colon \partial U_S\cong \partial D^2\to \C\smallsetminus \{0\}\cong S^1\times (0,\infty)\to S^1$ preserves standard orientations if and only if 
    \begin{equation}\label{eq:intersectionnumber}
         \text{the intersection number of } S \text{ with } C' \text{ is } 1.
    \end{equation}
    Now, we have a decomposition of $U\setminus C$ into $V_1/S^1$ and $V_2/S^1$, where
    \[
    V_1:=(\{(z_1,z_2)\neq (0,0)\colon |z_1|\geq |z_2|\}\times \C_0 ),\quad 
    V_2:=(\{(z_1,z_2)\neq (0,0)\colon |z_1|\leq |z_2|\}\times \C_0 ).
    \]
    Both of these sets are contractible, and their intersection is given by
    $V_1/S^1\cap V_2/S^1=\{(z_1,z_2)\neq (0,0)\colon |z_1|=|z_2|\}/S^1\times \C_0\cong S^1.$
    As a generator of $H_1(V_1\cap V_2)$, we choose the image of the circle
    $$S_{\gen}:=\{1\}\times S^1\times \{0\}\subset \{(z_1,z_2)\neq (0,0)\colon |z_1|=|z_2|\}\times \C_0$$
    under the projection map $V_1\cap V_2\to (V_1\cap V_2)/S^1$
    endowed with the standard orientation. Its preimage under the boundary map of the Mayer-Vietoris sequence associated to the above decomposition gives us a choice of a generator $G$ of $H_2(U\setminus C)$. We note that, when $\C \PP^1$ is equipped with the standard orientation and the (well-defined!) embedding $\C \PP^1 \to U\setminus C$ given by
    \begin{equation}\label{eq:embeddingProjSpace}
        [z_1:z_2]\mapsto
    \begin{cases}
        (1,z_2/z_1) \in V_1 & |z_1|\geq |z_2|\\
        (\overline{z}_1/\overline{z}_2,1) \in V_2 & |z_1|\leq |z_2|
    \end{cases}
    \end{equation}
    is considered, the image of $[\C \PP^1]$ is precisely $G$. This is since the map $\partial D^2\to \C \PP^1\to U\setminus C$
    has image $S_{\gen}$ and preserves standard orientations.\\
    Likewise, the image of $g$ intersects $\{(z_1,z_2)\colon |z_1|=|z_2|\}/S^1\times \C_0$ precisely in a circle by construction, and  therefore its image is a generator of $H_2(U\setminus C)$. By \cref{eq:intersectionnumber}, this is our chosen generator $G$ if and only if the intersection number of $S\cap U$ with $C'$ is $1$. Therefore, we want to show that the equivariant Euler class of the bundle $E:=((\C_{-1}\oplus\C_1)\smallsetminus \{0\})\times_{S^1} S^1$ evaluated on $G$ is $1$.\\
    
    Denote by $E^{\diag}$ the bundle $((\C_1\oplus\C_1)\smallsetminus \{0\})\times_{S^1} S^1$. That way, $E$ is isomorphic to the bundle $E^{\diag}$, namely via the equivariant diffeomorphism $(\C_{-1}\oplus\C_1)\smallsetminus \{0\}\to (\C_1\oplus\C_1) \smallsetminus \{0\}, (z_1,z_2)\mapsto (\overline{z}_1,z_2)$. The Euler class of $E^{\diag}$ evaluated on $[\C \PP^1]$, the image of the standard fundamental class of $\C \PP^1$ under the standard diffeomorphism $\C \PP^1\to S^3_{\diag}/S^1\subset E^{\diag}$, is $-1$. We now have the commutative diagram
    \begin{equation*}
		\begin{tikzcd}
                \C \PP^1 \arrow{d} \arrow{r} & \C \PP^1 \arrow{d}\\
                E \arrow{r} & E^{\diag}
		\end{tikzcd}
    \end{equation*}
    where the left vertical map is the one in \cref{eq:embeddingProjSpace}, the bottom horizontal map is complex conjugation in the first coordinate, and the top horizontal map is given by $[z_1,z_2]\mapsto [z_1,\overline{z}_2]$. Hence, the Euler class $e$ of the bundle $S^3\to \C \PP^1$ evaluated on the preimage of $[\C \PP^1]$, which is $-[\C \PP^1]$ under the top horizontal map, is $-1$. It follows that $e(G)=1$, as claimed.
\end{proof}

\section{Extending an isomorphism over a critical level}\label{sec:extend}

In this section we prove \Cref{thm:extendingnonsymplecticandsymplectic}, extending an isomorphism below a critical level beyond the critical level.

\subsection*{Case I: assume that $\lambda$ is non-extremal.}
We start by showing that an isomorphism extends under the condition that 
it is the identity near each preimage $C_i'$ under the Morse map of a connected component $C_i$ of the fixed point set at the critical level.\\

For $i=1,2$, let $M^i$ be a connected semi-free Hamiltonian $S^1$-space of dimension $6$, endowed with orientation coming from the symplectic form, and $\mu_i$ a pseudo momentum map on $M^i$. 
Assume that $\lambda$ is a non-extremal critical value of both $\mu_1$ and $\mu_2$, and that each of the connected components of the fixed point set $F_i$ at $\lambda$ is simply-connected. Let $\lambda-r$ be \textbf{right below} $\lambda$, that is, there is no critical value in $[\lambda-r,\lambda)$. Then, by \Cref{rem:criticalvalue}, $F_i'$ is a disjoint union of isolated points and spheres.
\begin{Notation}\label{not:identity}
    Let $C_1$ be a fixed component of $M^1$ and $C_2$ be a fixed component of $M^2$, both at the non-extremal critical level $\lambda$. Assume that they have isomorphic equivariant normal bundles in $M^1$ and in $M^2$. Let $U_i$ be a neighborhood of $C_i$ in $M^i$ such that there is an equivariant homeomorphism $g_i\colon U_i\to U$ to a fixed space $U$. Let $$h\colon V_1\to V_2$$ be an equivariant homeomorphism between subsets $V_i\subset U_i$.\\
    Then, for any subset $A\subset V_1$, we say that $h$ is \textbf{the identity on/near} $A$ if the map
    $$g_1(V_1)\to g_2(V_2),\quad x\mapsto (g_2\circ h\circ g_1^{-1})(x)$$
    is the identity on/near $g_1(A)\subset g_1(V_1)$.\\
    We make a similar definition for maps between the orbit spaces.\\

    In particular, denoting by $C_1'\subset F_1' \subset M^1_{\lambda-r}$ the component corresponding to $C_1$ and by $\pi_1\colon \mu_1^{-1}({\lambda-r})\to M^1_{\lambda-r}$ the orbit map, if $\pi_1^{-1}(C_1')\subset U_1$ and $V_1$ is a neighborhood of $C_1'$ in $M^1_{\lambda-r}$, then we can talk about $h$ being the identity near/on $C_1'\subset V_1$, and also about $h$ being the identity on/near $\pi_1^{-1}(C_1')\subset \pi_1^{-1}(V_1)$.
\end{Notation}
\begin{remark}\label{rem:htoh}
    As explained in \cite[Remark 5.15]{KW25}, if $h$ as in \Cref{not:identity} is equivariant and its descension to the orbit spaces is the identity near $C_1'$, then $h$ is isotopic through equivariant homeomorphisms to a map that is the identity near $\pi_1^{-1}(C_1')$. The point is that, in a tubular neighborhood $U$ of $\pi_1^{-1}(C_1')$, $h$ and the identity differ only by a map $U/S^1\to S^1$, which is nullhomotopic because $U/S^1\cong C_1'$ is simply-connected.
\end{remark}

\begin{lemma}\label{lem:topologicalextension}
     Let $C_1$ in $M^1$ and $C_2$ in $M^2$ be either fixed points of index $2$ or fixed spheres at level $\lambda$ with isomorphic equivariant normal bundles. Let $r>0$ be small enough such that $\lambda-r$ is right below $\lambda$ and  
     $C_i'$ is in the orbit space of a tubular neighborhood of $C_i$. Let $$f^{\lambda-r}\colon \mu_1^{-1}(\lambda-r)\to \mu_2^{-1}(\lambda-r)$$ be an equivariant homeomorphism whose induced map $f_{\lambda-r}$ on orbit spaces sends $C_1'\subset F_1'$ into $C_2'\subset F_2'$, preserving their orientations given by the symplectic forms.

     Then there is an isotopy $h^s$ from $f^{\lambda-r}$ to an equivariant homeomorphism $\mu_1^{-1}(\lambda-r)\to \mu_2^{-1}(\lambda-r)$ that is the identity on $\pi_1^{-1}(C'_1)$ (see \Cref{not:identity}).\\
     Further, if $C_1$ and $C_2$ equal the critical sets at level $\lambda$, then $f^{\lambda-r}$ extends to an equivariant homeomorphism $g \colon \mu_1^{-1}([\lambda-r, \lambda+r])\to \mu_2^{-1}([\lambda-r, \lambda+r])$.
 \end{lemma}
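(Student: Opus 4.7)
My plan is to prove the two assertions in turn. For the first (isotoping $f^{\lambda-r}$ to a map that is the identity on $\pi_1^{-1}(C_1')$), I will modify $f_{\lambda-r}$ on the orbit space near $C_1'$ and then apply \Cref{rem:htoh} to lift the modification back to the equivariant setting; this is exactly where I will use that $C_1'$ is simply-connected. For the second assertion (extending $f^{\lambda-r}$ across $\lambda$ when $C_i$ exhausts the fixed set at $\lambda$), I will define the extension piecewise, via the standard local model near $C_i$ and via the normalized gradient flow on the complement. The hard part will be verifying that these two pieces agree on the overlap, which I expect to be the main technical obstacle.

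For Part~1, by the equivariant slice theorem together with the hypothesis that $C_1$ and $C_2$ have isomorphic equivariant normal bundles, I choose equivariant homeomorphisms $g_i\colon U_i\to U$ of invariant tubular neighborhoods $U_i$ of $C_i$ onto a common standard model $U$ (shrinking so that $f_{\lambda-r}$ sends $U_1\cap M^1_{\lambda-r}$ into $U_2\cap M^2_{\lambda-r}$). Transported via the $g_i$, the restriction $f_{\lambda-r}|_{C_1'}$ becomes an orientation-preserving self-homeomorphism of the sphere $C'\subset U/S^1$, which is isotopic to the identity (e.g.\ by path-connectedness of $\mathrm{Homeo}^+(S^2)$ or by Alexander's trick). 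Since the normal bundles of $C_1'\subset M^1_{\lambda-r}$ and $C_2'\subset M^2_{\lambda-r}$ are both inherited from the single model $U$, this isotopy will extend across a tubular neighborhood of $C'$ via a topological bundle map, making $f_{\lambda-r}$ agree (in the standard model) with the identity near $C_1'$. Because $C_1'$ is simply-connected, \Cref{rem:htoh} will then lift this orbit-space isotopy to an isotopy of $f^{\lambda-r}$ through equivariant homeomorphisms whose endpoint is the identity on (in fact near) $\pi_1^{-1}(C_1')$, extending by the identity outside a small invariant neighborhood.

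For Part~2, I will assume from Part~1 that $f^{\lambda-r}$ is already the identity on a neighborhood of $\pi_1^{-1}(C_1')$ in the local model, and that $C_i$ is the entire fixed set at $\lambda$. I will first fix invariant Riemannian metrics on $M^1$ and $M^2$ that restrict on $U_1,U_2$ to the pullbacks via $g_1,g_2$ of a common invariant model metric on $U$; such metrics exist by an equivariant partition of unity. Let $\Phi^i_t$ denote the associated normalized gradient flows of $\mu_i$, and pick a smaller invariant sub-neighborhood $U_1'$ of $C_1$ with $\overline{U_1'}\subset U_1$. I will then define
\[
g\colon \mu_1^{-1}([\lambda-r,\lambda+r])\to \mu_2^{-1}([\lambda-r,\lambda+r])
\]
by $g := g_2^{-1}\circ g_1$ on $U_1\cap \mu_1^{-1}([\lambda-r,\lambda+r])$, and, for points outside $U_1'$, by the flow rule $g(\Phi^1_t(p)) := \Phi^2_t(f^{\lambda-r}(p))$ for $p\in \mu_1^{-1}(\lambda-r)\setminus \pi_1^{-1}(C_1')$ and $t\in[0,2r]$ (this is well-defined since $f^{\lambda-r}$ sends $\pi_1^{-1}(C_1')$ to $\pi_2^{-1}(C_2')$, so neither trajectory meets the critical set). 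Both pieces are equivariant orientation-preserving homeomorphisms. They will agree on the overlap $(U_1\setminus U_1')\cap \mu_1^{-1}([\lambda-r,\lambda+r])$ because, at level $\lambda-r$ inside $U_1$, $f^{\lambda-r}$ coincides near $\pi_1^{-1}(C_1')$ with $g_2^{-1}\circ g_1$ by Part~1, and the metric choice ensures that $g_2^{-1}\circ g_1$ intertwines $\Phi^1$ and $\Phi^2$. Continuity and equivariance of the glued $g$ then follow from those of each piece, completing the proof.
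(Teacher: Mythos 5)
The main problem is the gluing in your Part~2: the two pieces you define simply do not agree on the overlap $(U_1\smallsetminus U_1')\cap\mu_1^{-1}([\lambda-r,\lambda+r])$, and the reason you give does not cover it. After Part~1 you only know that $f^{\lambda-r}$ coincides with $g_2^{-1}\circ g_1$ on some (possibly very small) neighborhood $N$ of $\pi_1^{-1}(C_1')$ inside the single level $\mu_1^{-1}(\lambda-r)$. But the flow-rule piece at a point $x$ of the overlap at a level near $\lambda$ is computed from the trajectory's starting point $p$ at level $\lambda-r$, and these starting points are in general far from $\pi_1^{-1}(C_1')$: in the local model $\mu=-|v_1|^2+|v_2|^2$ the quantity $|v_1|\,|v_2|$ is conserved along the gradient flow, so a point on the ``waist'' $|w_1|=|w_2|$ of $U_1$ flows backwards to a point at level $\lambda-r$ with $|v_1|^2\approx 1.2\,r$, i.e.\ outside any thin neighborhood of the stable sphere and even outside $U_1$ if you shrink $U_1$ to force its bottom slice into $N$. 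At such $x$ the flow rule gives $\Phi^2_t(f^{\lambda-r}(p))$ with $f^{\lambda-r}(p)\neq g_2^{-1}g_1(p)$ (or with $g_2^{-1}g_1$ not even defined at $p$), so it disagrees with the model piece $g_2^{-1}\circ g_1(x)$; matching the metrics cannot repair this, and there is no choice of $U_1\supset U_1'$ that makes the overlap condition hold, because the trajectories through the waist of $U_1$ always originate outside the coincidence zone Part~1 provides. (Two further glossed points: you never arrange $g_i$ to intertwine $\mu_i$ with the model momentum map, without which $g_2^{-1}\circ g_1$ is not even level-preserving; and the final $g$ must restrict to the \emph{given} $f^{\lambda-r}$ at level $\lambda-r$, so the Part-1 isotopy has to be absorbed in a collar rather than assumed away.)

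The paper's proof avoids exactly this: above a collar $[\lambda-r,\lambda-r/2]$ (used to interpolate from $f^{\lambda-r}$ to the isotoped map $h^1$) it never uses the local model map at all. Instead it defines the extension level by level as the conjugate of $h^1$ by the Morse flow $f_{\Morse}$ to and from the critical level, extended by the identity over the locus of points flowing into $C_1$; continuity there follows from continuity of $f_{\Morse}$ together with $h^1$ being the identity on $\pi_1^{-1}(C_1')$, with no overlap of two different formulas to reconcile. You would need to replace your gluing by this (or an equivalent limit argument along trajectories approaching the stable/unstable cones). Also, in Part~1 your step ``this isotopy will extend across a tubular neighborhood of $C'$ via a topological bundle map \dots extending by the identity outside a small invariant neighborhood'' is too quick: the bundle isotopy covering the base isotopy is not the identity near $\partial V$, so you cannot extend it by the identity as stated; you need either the topological isotopy extension theorem of Edwards--Kirby \cite{EK69} with support close to $C_1'$, as in the paper, or an explicit radial damping exploiting that the isotopy starts at the identity. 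The remaining ingredients of your Part~1 (isotoping the sphere map to the identity and lifting via \Cref{rem:htoh} using simple connectivity) do match the paper's argument.
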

 \begin{proof}

    Let $V$ be a closed tubular neighborhood of $C_1'$ in $M^1_{\lambda-r}$.  As in \Cref{not:identity}, we may interpret $f_{\lambda-r}$ as a map $V \to M^1_{\lambda-r}$. Since $f_{\lambda-r}$ maps $C_1'\cong S^2$ into itself, preserving orientation, the identity on $C_1'$ is isotopic to the restriction of $f_{\lambda-r}^{-1}$ to $C_1'$. Suppose we have extended this isotopy $h_s$ to the whole of $M^1_{\lambda-r}$ such that $h_0=\text{id}$. Then, a composition with $f_{\lambda-r}$ would yield an isotopy of $f_{\lambda-r}$ to a homeomorphism $M^1_{\lambda-r}\to M^1_{\lambda-r}$ that is the identity on $C_1'$. By lifting this isotopy, in turn, to $\mu_1^{-1}(\lambda-r)$ (interpret the isotopy as a map $\mu_1^{-1}(\lambda-r)\times [0,1]\to \mu_2^{-1}(\lambda-r)\times [0,1]$ \cite[Lemma B.1]{KW25}), we would then obtain  $$h^s\colon \mu_1^{-1}(\lambda-r)\to \mu_2^{-1}(\lambda-r)$$ such that $h^0=f^{\lambda-r}$ and $h^1$ is the identity near $\pi_{1}^{-1}(C_1')$, by \Cref{rem:htoh}. \\
    
     Now let us find the extension of $h_s$ to $M_{\lambda-r}^{1}$. We can certainly lift $h_s$ to the boundary of the normal disk bundle $D^2\to V\to C_1'$ of $C_1'$ in $M_{\lambda-r}$ horizontally, using some connection of the $S^1$-principal bundle $\partial D^2\cong S^1\to \partial V\to C_1'$, starting at the identity; we also call this $h_s$ by abuse of notation. Hence, writing any element $v\in V\smallsetminus C_1'$ uniquely as $t\cdot v^{\partial}$ for $t\in (0,1]$ and $v^{\partial}\in \partial V$, we define
     $$h_s\colon V\to V, \quad h_s(v)=
     \begin{cases}
         t\cdot h_s(v^{\partial}), & v\in V\smallsetminus C_1' \\
         h_s(v), & v\in C_1'
     \end{cases}.$$
     Finally, by Corollary 1.2 and the remark below its proof in \cite{EK69}
     (which, combined, states that a topological isotopy of a submanifold that can locally be extended can globally be extended with support arbitrarily close to that submanifold), the isotopy extends to an isotopy $M^1_{\lambda-r}\to M^1_{\lambda-r}$ with support arbitrarily close to $C_1'$.\\
     
     To show the further assertion, assume that $C_i=F_i$. By the identification
     $$\mu_i^{-1}([\lambda-r,\lambda-r/2])\cong \mu_i^{-1}(\lambda-r)\times [\lambda-r,\lambda-r/2]$$
     obtained from the normalized gradient flow of $\mu_i$, we can consider $h^1$ as a map $\mu_{1}^{-1}(\lambda-r/2)\to \mu_{2}^{-1}(\lambda-r/2)$ and use the isotopy $h^s$ to
     extend $f^{\lambda-r}$ to a $\mu-S^1$-homeomorphism
     $$g\colon \mu_1^{-1}([\lambda-r,\lambda-r/2])\to \mu_2^{-1}([\lambda-r,\lambda-r/2])$$
     such that the map $g^{\lambda-r/2}$ it induces on  $\mu^{-1}(\lambda-r/2)$ equals $h^1$.
      Using that $g^{\lambda-r/2}$ is the identity on $\pi^{-1}(C_1')$, we can define an equivariant homeomorpism $g^{\lambda}\colon \mu_1^{-1}(\lambda)\to \mu_2^{-1}(\lambda)$ by setting $g^{\lambda}$ to be the identity on $C_1$, and
     $$g^{\lambda}(x)= (f^{\lambda-r/2}_{\Morse} \circ g^{\lambda-r/2}\circ (f^{\lambda-r/2}_{\Morse})^{-1})(x)$$
     otherwise.
     Finally, define the equivariant homeomorphism $g\colon \mu_1^{-1}([\lambda-r,\lambda+r])\to \mu_2^{-1}([\lambda-r,\lambda+r])$ by
     \[
     g(x)=
     \begin{cases}
        x & \text{if } f^{\mu_1(x)}_{\Morse}(x)\in C_1\\
        \left( \left(f^{\mu_1(x)}_{\Morse}\right)^{-1}  \circ g^{\lambda}\circ f^{\mu_1(x)}_{\Morse} \right)(x) &  \text{otherwise }
     \end{cases}.
     \]
     Here, $g(x)=x$ is  understood as $g$ being the identity on $C_1'\subset M_1^t$, for all $t\in [\lambda-r,\lambda+r]$.
     The restriction of $g$ to $\mu_1^{-1}(\lambda-r/2)$ is the previously defined $g^{\lambda-r/2}$, so we found our desired extension. 
 \end{proof}

We proceed to prove \Cref{thm:extendingnonsymplecticandsymplectic}. Let $(S^1 \acts M^1,\mu_1)$,  $(S^1 \acts M^2,\mu_2)$, $\lambda$ and $\eta_{\lambda},\eta,\eta'$ be as in the setting of the theorem.
Let $\lambda-r$ be right below $\lambda$ and
\begin{equation}\label{eq:f}
    f\colon (M^1)^{\leq \lambda-r}\to (M^2)^{\leq \lambda-r}
\end{equation}
be any orientation-preserving equivariant homeomorphism that intertwines $\mu_1$ and $\mu_2$ near level $\lambda-r$, such that the diagram
    \begin{equation*}
				\begin{tikzcd}
				H_{S^1}^*(M^2) \arrow[r, "\eta"] \arrow{d} &  H_{S^1}^*(M^1) \arrow{d} \\
				H_{S^1}^*((M^2)^{\leq \lambda-r}) \arrow[r, "f^*"]  & H_{S^1}^*((M^1)^{\leq \lambda-r})
			\end{tikzcd}
		\end{equation*}
        commutes. Denote by $f^{\lambda-r}$ the induced map $\mu_{1}^{-1}(\lambda-r) \to \mu_{2}^{-1}(\lambda-r)$ and by $f_{\lambda-r}$ the induced map on the reduced spaces.\\

We first deduce from \Cref{thm:Poincare=Canonical} that the equivariant homeomorphism $f\colon (M^1)^{\leq \lambda-r}\to (M^2)^{\leq \lambda-r}$ acts appropriately on homology.
Recall the notation of $\mathcal{D}^{\pt}_i$ and $\mathcal{D}^{\sph}_i(k,l)$, given in the Introduction.
\begin{lemma}\label{lem:positivelinebundles}
    Let $[C'_1]$ be either in $\mathcal{D}^{\pt}_1$ or in $\mathcal{D}^{\sph}_1(k,l)$, and let $C_1$ be the corresponding fixed point set at level $\lambda$. Let $C_2:=
    \eta_{\lambda}(C_1)$ and $c_1$ and $c_2$ the unique canonical classes of $C_1$ and of  $C_2$, as in \Cref{def:canonical}. Then $\eta(c_2)=c_1$ on $\mu_1^{-1}([\lambda-r,\infty))=(M^1)^{\geq \lambda-r}$. In particular, $C_1$ and $C_2$ have isomorphic positive normal bundles, and $f_{\lambda-r}$ sends $[C_1']$ to $[C_2']$.
\end{lemma}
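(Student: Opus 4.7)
The plan is to prove the stronger statement that $\eta(c_2)=c_1$ globally in $H^*_{S^1}(M^1)$ by verifying that $\eta(c_2)$ itself satisfies the three defining properties of the canonical class of $C_1$ and then invoking the uniqueness part of \Cref{thm:Poincare=Canonical}. The first two properties -- vanishing of $\eta(c_2)$ on $(M^1)^{>\lambda}$ and on the components of $F_1$ other than $C_1$ -- should follow from general principles. Since each $H^*_{S^1}(C)$ is a polynomial-type ring with only trivial idempotents, the algebra isomorphism $\eta'$ must respect the product decomposition $H^*_{S^1}((M^i)^{S^1}) = \prod_C H^*_{S^1}(C)$, and agreement with $\eta_\lambda^*$ at each critical level forces the induced bijection of components to preserve critical levels. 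Combining this with the commutative square of \Cref{def:muisomorphic} and Kirwan injectivity for $-\mu_1$ above $\lambda$ (valid for pseudo momentum maps by \Cref{rem:momentummap}) yields the vanishing on $(M^1)^{>\lambda}$; the same square together with $c_2|_{F_2 \smallsetminus C_2}=0$ yields the vanishing on $F_1 \smallsetminus C_1$.

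The delicate step is to identify the restriction at $C_1$: the commutative square gives $\eta(c_2)|_{C_1} = \eta_\lambda^*(e^+_{S^1}(C_2))$, and we need this to equal $e^+_{S^1}(C_1)$. To establish this, I would use \Cref{lem:modifiedmomentmap} to modify $\mu_1$ near $C_1$ into a pseudo momentum map $\tilde{\mu}_1$ for which $C_1$ is the only fixed component at its critical level $\lambda+\eps$. Then \Cref{prop:prop2.1} applied to $\tilde{\mu}_1$ identifies the kernel of restriction between the appropriate super-level sets with the multiples of $e^+_{S^1}(C_1)$ in $H^*_{S^1}(C_1)$, via restriction to $C_1$. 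Since $\eta(c_2)$ vanishes on $(M^1)^{\mu_1 > \lambda}$, which contains the relevant super-level set of $\tilde{\mu}_1$, it lies in this kernel, so $\eta(c_2)|_{C_1} = \alpha \cdot e^+_{S^1}(C_1)$ for some $\alpha \in H^*_{S^1}(C_1)$. A degree count -- using that $\eta_\lambda^*$ preserves the generator $t$ of $H^*_{S^1}(\pt)$ (it factors through the map to a point) and preserves the fundamental cohomology class of a fixed sphere ($\eta_\lambda$ being orientation-preserving) -- forces $\alpha$ to be the multiplicative unit, and hence $\eta_\lambda^*(e^+_{S^1}(C_2)) = e^+_{S^1}(C_1)$. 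In the semi-free setting the equivariant Euler class determines the positive normal bundle as an equivariant bundle (it is $\C_1\oplus\C_1$ at an isolated fixed point of index $2$, or a weight-$1$ equivariant line bundle on a fixed sphere, whose Chern number is read off $e^+_{S^1}(C) = t + l \cdot x$), so the positive normal bundles of $C_1$ and $C_2$ are isomorphic. Having verified all three defining properties, uniqueness in \Cref{thm:Poincare=Canonical} gives $\eta(c_2) = c_1$, in particular on $(M^1)^{\geq \lambda-r}$.

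For the final assertion on $f_{\lambda-r}$, combine \Cref{thm:Poincare=Canonical}, which identifies $c_i|_{H^2(M^i_{\lambda-r})}$ with $[C_i']^*$, with the commuting square in the hypothesis of \Cref{thm:extendingnonsymplecticandsymplectic}: restricting $\eta(c_2)=c_1$ through $H^*_{S^1}((M^1)^{\leq \lambda-r})$ at level $\lambda-r$ yields $f^{\lambda-r,*}([C_2']^*) = [C_1']^*$, and then orientation-preservation of $f^{\lambda-r}$ together with Poincaré duality produces $f_{\lambda-r}([C_1']) = [C_2']$. The main technical obstacle I anticipate is the degree/scalar argument pinning down $\alpha=1$: making precise how $\eta_\lambda^*$ interacts with both the $H^*_{S^1}(\pt)$-module structure and with the orientation of the fundamental class of a sphere requires keeping careful track of the factorization of $\eta_\lambda$ through a point and of its induced action on $H^2$ of the sphere.
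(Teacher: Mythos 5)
Your overall route is essentially the paper's: reduce to the fixed point set (you do it via the uniqueness statement in \Cref{thm:Poincare=Canonical} plus Kirwan injectivity, the paper via injectivity of $H_{S^1}^*((M^1)^{\geq \lambda-r})\to H_{S^1}^*(((M^1)^{S^1})^{\geq\lambda-r})$ -- both fine), handle the components other than $C_1$ using $\eta'=\eta_\lambda^*$ on $H^*_{S^1}(F_2)$, apply \Cref{prop:prop2.1} after the \Cref{lem:modifiedmomentmap} modification to see that $\eta(c_2)|_{C_1}$ is a multiple of $e^+_{S^1}(C_1)$, and pin the multiplier down via the $\C\PP^\infty$ factor; the last assertion about $f_{\lambda-r}[C_1']=[C_2']$ you prove exactly as the paper does (there it is carried out inside the proof of \Cref{lem:new}, using the commuting square, \Cref{thm:Poincare=Canonical} and orientation-preserving Poincar\'e duality). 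For fixed spheres your scalar argument even streamlines the paper's: both positive normal bundles are automatically of rank one at a non-extremal level, so comparing the coefficient of $t$ gives $\alpha=1$ directly, without the paper's intermediate ``$\pm1$'' step.

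There is, however, a genuine gap in the case $[C_1']\in\mathcal{D}^{\pt}_1$, i.e.\ $C_1$ an isolated fixed point of index $2$. Nothing in your argument rules out that $C_2=\eta_\lambda(C_1)$ is an isolated fixed point of index $1$. \Cref{prop:prop2.1} only says that $\eta(c_2)|_{C_1}$ lies in the ideal of $H^*_{S^1}(C_1)=\Z[t]$ generated by $e^+_{S^1}(C_1)=t$, so $\alpha$ is a priori an element of $H^*_{S^1}(C_1)$, not an integer; your ``$\alpha\in\Z$, hence $\alpha=1$ by the $t$-coefficient'' step silently assumes $\deg c_2=2$, i.e.\ that the positive normal bundle of $C_2$ has rank one. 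If $C_2$ had index $1$, then $e^+_{S^1}(C_2)=t^2$, and $\eta_\lambda^*(e^+_{S^1}(C_2))=t^2=t\cdot e^+_{S^1}(C_1)$ is perfectly consistent with everything you have established (with $\alpha=t$), yet the conclusion on normal bundles would fail. The paper closes exactly this loophole with its symmetry step: $\eta^{-1}(c_1)$ also vanishes on $(M^2)^{>\lambda}$ and on $F_2\smallsetminus C_2$ (by the same reasoning you use for $\eta(c_2)$), so \Cref{prop:prop2.1} applied on $M^2$ forces $(\eta_\lambda^{-1})^*(e^+_{S^1}(C_1))=t$ to be an $H^*_{S^1}(C_2)$-multiple of $e^+_{S^1}(C_2)$; this is impossible for $e^+_{S^1}(C_2)=t^2$, so $C_2$ has index $2$, and multiplying the two relations shows the multiplier is a unit, after which your restriction to $\{pt\}\times\C\PP^\infty$ fixes it to be $+1$. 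Adding this two-sided application of \Cref{prop:prop2.1} (or some other argument that $\eta_\lambda$ preserves the index of isolated fixed points) is needed to make your proof complete.
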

\begin{proof}
    Since the canonical homomorphism $H_{S^1}^*((M^1)^{\geq \lambda-r})\to H_{S^1}^*(((M^1)^{S^1})^{\geq \lambda-r})$  is injective (\cite{Ki84} for rational coefficients, and \cite[Proposition 6.2]{TW03} for an integer version), it suffices to check that these classes agree on the respective fixed point sets.\\
    Both classes clearly vanish on the fixed point sets not at level $\lambda$. So let us look at the fixed point sets at level $\lambda$. There, $c_1$ restricts on $C_1$ to the equivariant Euler class of the positive normal bundle of $C_1$ and restricts on all other components to $0$, by definition. Also, $c_2$ vanishes on $\eta_{\lambda}(F_1\smallsetminus C_1)$. Since $\eta'$ agrees with $\eta_{\lambda}^{*}$, this implies that $\eta(c_2)$ vanishes on $F_1 \smallsetminus C_1$. It is left to check that $c_1$ and $\eta(c_2)$ agree on $C_1$.\\
    By \Cref{prop:prop2.1}, $\eta(c_2)$ has to be an integer multiple of $c_1$ after restriction to $C_1$;  due to symmetry, $(\eta)^{-1}(c_1)$ has to be an integer multiple of $c_2$ after restriction to $C_2$. Hence  $c_1=\pm \eta(c_2)$. However, the restrictions of $c_1$ and $\eta(c_2)$ on $\{pt.\}\times \C \PP^{\infty}\subset C_1\times \C \PP^{\infty}$ agree because the restriction of both $c_1$ and $c_2$ to $\{pt.\}\times \C \PP^{\infty}$ is just the Euler class of $S^{\infty}\times_{S^1} \C_1$  and $\eta$ restricts to the identity on $H^*(\{pt.\}\times \C \PP^{\infty})$.\\
\end{proof}
We can now prove \Cref{lem:new}.
\begin{proof}[Proof of \Cref{lem:new}]
Let $C_1$ be a fixed point component of $M^1$ at level $\lambda$, and $C_2:=\eta_{\lambda}(C_1)$. We already know that $C_1$ and $C_2$ have isomorphic positive normal bundles by \Cref{lem:positivelinebundles}, and we know that $f_{\lambda-r}$ sends $[C_1']$ to $[C_2']$. So, showing that $f_{\lambda-r}$ sends $\mathcal{D}^{\pt}_1$/$\mathcal{D}^{\sph}_1(k,l)$ bijectively into $\mathcal{D}^{\pt}_2$/$\mathcal{D}^{\sph}_2(k,l)$ amounts to showing that $C_1$ and $C_2$ have isomorphic negative normal bundles.\\

These are determined by their Euler classes $e_{-}(C_1)$ and $e_{-}(C_2)$, we argue first that the homology class $f_{\lambda-r}(C_1')$ equals $C_2'$. Indeed, the restriction of $\eta$ to $H_{S^1}^*((M^2)^{\leq \lambda-r})$ is $f^*$, the map induced by $f$ on equivariant cohomology, by assumption. Since $\eta(c_2)=c_1$, 
we have $f^*(c_2)=c_1$ after restriction to $\mu_2^{-1}(\lambda-r)$.  By \Cref{thm:Poincare=Canonical}, the class $c_i$ is the Poincaré dual of $C_i'$ in $M^i_{\lambda-r}$; by assumption, $f_{\lambda-r}$ preserves orientation. Therefore $f_{\lambda-r}(C_1')=C_2'$.\\
    Now, $f_{\lambda-r}$ intertwines the Euler classes $e^1$ and  $e^2$ of the principal $S^1$-bundles  $S^1\to \mu_{1}^{-1}(\lambda-r) \to M^1_{\lambda-r}$  and $S^1 \to \mu_{2}^{-1}(\lambda-r) \to M^2_{\lambda-r}$, implying that $e^1(C'_1)=e^2(C'_2)$. On the other hand, $e^1([C'_1])=e_-(C_1)([C_1])$ and $e^2([C'_2])=e_-(C_2)([C_2])$ by \Cref{rem:criticalvalue}.  This completes the proof.
\end{proof}
 
 \begin{proof}[Proof of \Cref{thm:extendingnonsymplecticandsymplectic} in case $\lambda$ is non-extremal]
 Consider $f\colon (M^1)^{\leq \lambda-r}\to (M^2)^{\leq \lambda-r}$.
By \Cref{lem:new}, 
$f_{\lambda-r}$ maps bijectively $$\mathcal{D}_1^{\sph}:=\bigcup\limits_{k,l\in \Z} \mathcal{D}_1^{\sph}(k,l)$$
 to
$$\mathcal{D}_2^{\sph}:=\bigcup\limits_{k,l\in \Z} \mathcal{D}_2^{\sph}(k,l).$$
This is enough 
for the symplectic version of \Cref{thm:extendingnonsymplecticandsymplectic}, 
since then we can use \cite[Theorem 1.9]{KW25}.

Now let us deal with the non-symplectic version.  Denote by $\mathcal{C}^i_{\sph}$ the collection of all fixed spheres at level $\lambda$, by $\mathcal{C}^i_{\pt,2}$ the collection of all isolated fixed points with index $2$, and by $\mathcal{C}^i_{\pt,1}$ the collection of all isolated fixed points with index $1$. Using \Cref{lem:modifiedmomentmap} for negative $\eps$ repeatedly on the fixed spheres in $\lambda$ and then on the isolated fixed points of index $2$, we obtain $S^1$-invariant Morse-Bott functions $\mu_i'\colon M^i\to \R$ with the following properties.
     \begin{itemize}
         \item There is $r>0$ such that $\mu_i=\mu_i'$ outside $\mu_i^{-1}([\lambda-r,\lambda+r])$.
         \item More precisely, there are tubular neighborhoods $U$ and $U'$ of $F_i$, with $U\subset U'$, such that $\mu_i'-\mu_i$ is constant on every connected component of $U$, and $\mu_i=\mu_i'$ outside $U'$.
         \item The critical set of $\mu_i'$ equals $(M^i)^{S^1}$, and each critical level of $\mu_i'$ in $[\lambda-r,\lambda)$ contains precisely one fixed component of the $S^1$-action. 
         \item There is $r'>0$ such that $(\mu_i')^{-1}([\lambda-3r',\lambda-2r'])$ contains precisely the elements in $\mathcal{C}^{i}_{\sph}$, $(\mu_i')^{-1}([\lambda-2r',\lambda-r'])$ contains precisely the elements in $\mathcal{C}^{i}_{\pt,2}$, and $(\mu_i')^{-1}(\lambda)$ contains precisely the elements in $\mathcal{C}^{i}_{\pt,1}$.
         \item For the given diffeomorphism $\eta_{\lambda}\colon F_1\to F_2$, we have $\mu_2'\circ \eta_{\lambda}=\mu_1'$.
     \end{itemize}
The assumptions on $\mu_i$ in \Cref{thm:extendingnonsymplecticandsymplectic} hold for $\mu_i'$. Indeed, we only need to check that for any critical level $\lambda'$ of $\mu'$ in $[\lambda-3r',\lambda-2r']$, the fundamental group of the complement of the fixed sphere in $(\mu')^{-1}(\lambda')/S^1$ is cyclic. This is guaranteed by \Cref{lem:fundamentalgroup}.\\
It remains to show that the theorem holds with $\mu_i'$ replacing $\mu_i$. To simplify notation, we rename $\mu_i'$ into $\mu_i$.\\

     We will obtain a $\mu-S^1$-homeomorphism $\mu_1^{-1}([\lambda-r,\lambda+r])\to \mu_2^{-1}([\lambda-r,\lambda+r])$ that agrees with $f$ on level $\mu_1^{-1}(\lambda-r)$ and agrees with $\eta_{\lambda}$ on $F_1$ by first extending $f$ over all elements in $\mathcal{C}^1_{\sph}$, then over all elements in $\mathcal{C}^i_{\pt,2}$, and lastly over all elements in $\mathcal{C}^i_{\pt,1}$. If one of these sets is empty, there is no need to argue how to extend over it; so we assume that none of these sets are empty.

    Assume that $\lambda'$ is a critical value of $\mu_1$ in $[\lambda-3r',\lambda-2r']$, and let $C_1$ be the corresponding fixed sphere. Denote by $C_2$ the image of $C_1$ under $f_{\lambda'}$. Assume that we have already extended $f$ to a $\mu-S^1$-homeomorphism
    $$f\colon \mu_1^{-1}([\lambda-r,\lambda'-\delta])\to \mu_2^{-1}([\lambda-r,\lambda'-\delta]),$$
    where $\delta$ is such that there is no critical level in $[\lambda'-\delta,\lambda')$.
    Then the map $f_{\lambda'-\delta}$ induced on the reduced space $M_{\lambda'-\delta}$ sends the class $[C'_1]\in \mathcal{D}^{\sph}_1(k,l)$ to $[C'_2]\in \mathcal{D}^{\sph}_2(k,l)$ by \Cref{lem:new}. 
    The complement of $C_2$ in $M^2_{\lambda'}$ has cyclic fundamental group, hence so does the complement of $C_2'$ in $M^2_{\lambda'-r'}$ because the Morse flow $M^2_{\lambda'-\delta}\to M^2_{\lambda'}$ is a blowup map in the topological category.
    Thus, it follows from \cite[Theorem 6.1]{Su15} that there is a topological isotopy from $f_{\lambda'-\delta}(C_1')$ to $C_2'$ which comes from an ambient isotopy, and hence an isotopy $g_s\colon M^1_{\lambda'-\delta}\to M^2_{\lambda'-\delta}$, $s\in [0,1]$, such that $g_0=f_{\lambda'-\delta}$ and $g_1(C'_1)=C'_2$.\\
    Using a connection of the principal bundle $S^1\to \mu_1^{-1}(\lambda'-\delta)\to M^1_{\lambda'-\delta}$, we can lift $g_s$ to a continuous family of equivariant homeomorphisms $g^s\colon \mu_1^{-1}(\lambda'-\delta)\to \mu_2^{-1}(\lambda'-\delta)$ such that $g^0=f^{\lambda'-\delta}$. 
    By identifying $\mu_i^{-1}([\lambda'-\delta,\lambda'-\delta/2])$ with $M^i_{\lambda'-\delta}\times [\lambda'-\delta,\lambda'-\delta/2]$ using the normalized gradient flow of $\mu_i$, we obtain a level-preserving homeomorphism
    $$g\colon \mu_1^{-1}([\lambda'-\delta,\lambda'-\delta/2]) \to \mu_2^{-1}([\lambda'-\delta,\lambda'-\delta/2])$$
    such that $g^{\lambda'-\delta}=f^{\lambda'-\delta}$ and $g_{\lambda'-\delta/2}(C_1')=C_2'$. Finally,  \Cref{lem:topologicalextension} (which we may use since $C_1$ and $C_2$ have isomorphic normal bundles by \Cref{lem:positivelinebundles}) gives $\eps>0$ and the desired extension
    $$g\colon \mu_1^{-1}([\lambda'-\delta,\lambda'+\eps]) \to \mu_2^{-1}([\lambda'-\delta,\lambda'+\eps])$$
    of $g$ (and therefore $f$) over the level $\lambda'$.\\
    
    Now denote by $\lambda'$ any critical value in $[\lambda-2r',\lambda-r']$. By design, the only critical sets in $\mu_i^{-1}([\lambda-2r',\lambda-r'])$ are exceptional spheres, so in particular the critical set $C_i$ in $\mu_i^{-1}(\lambda')$ consists precisely of an isolated fixed point of index $2$. We have that $M^i_{\lambda-2r'}$ is homeomorphic to $M^i_{\lambda-r'}$. Therefore, if we let again $C_1',C_2'\subset M^1_{\lambda'-\delta},M^2_{\lambda'-\delta}$ be the spheres that are being mapped to $C_1,C_2$ under the Morse flow, we have that $g_{\lambda'-\delta}([C_1'])=[C_2']$ because of \Cref{lem:positivelinebundles}, and the complements of $C_i'$ in $M^i_{\lambda'-\delta}$ clearly have trivial fundamental group. Therefore, the same arguments as before can be applied to extend the $\mu-S^1$-homeomorphism
    $$f\colon \mu_1^{-1}([\lambda-r,\lambda'-\delta])\to \mu_2^{-1}([\lambda-r,\lambda'-\delta])$$
    over $\lambda'$, repeatedly, giving us a $\mu-S^1$-homeomorphism
    $$g\colon \mu_1^{-1}([\lambda-r,\lambda-\delta])\to \mu_2^{-1}([\lambda-r,\lambda-\delta])$$
    for any $\delta>0$ sufficiently small.\\
    
     Lastly, it is possible to extend $g$ over the fixed points $C_1$ of index $1$, that are all located at the level $\lambda$, because $g_{\lambda-\delta}$ is clearly isotopic to a homeomorphism $M^1_{\lambda-\delta}\to M^2_{\lambda-\delta}$ that maps the finite sets $C_1'$ and $C_2'$ into each other. This finishes the proof.
 \end{proof}

\subsection*{Case II: assume that $\lambda$ is maximal.}\label{subsec:maximal}
We now prove \Cref{thm:extendingnonsymplecticandsymplectic} in case $\lambda$ is maximal. The symplectic version was already proven in \cite[Theorem 1.9]{KW25}, which states, among other things, that any isomorphism defined between two Hamiltonian $S^1$-manifolds $M^1$ and $M^2$ right below a maximal level $\lambda$ yields an isomorphism $M^1\to M^2$, provided that the maxima are both points or both spheres.\\

For the non-symplectic version, we need some results from \cite{KW25}, as well, that we now restate for our current setting for the convenience of the reader.
\begin{lemma}\label{lem:neighborhoodsmaxima}
Let $M^1$ and $M^2$ be compact, connected, semi-free Hamiltonian $S^1$-manifolds of dimension six and let $\lambda-r$ be right below the common maximal critical value $\lambda$. Assume that the maxima are simply-connected.
\begin{enumerate}
    \item If the maxima are of dimension $2$, the space of orientation-preserving homeomorphisms $M^1_{\lambda-r}\to M^2_{\lambda-r}$ preserving the Euler classes of the bundles $S^1\to \mu_i^{-1}(\lambda-r)\to M^i_{\lambda-r}$ is connected (\cite[Lemma 5.11(1)]{KW25}).
    \item If the maxima are of dimension $2$, then an equivariant homeomorphism $\mu_1^{-1}({\lambda-r})\to \mu_2^{-1}({\lambda-r})$ yields an equivariant symplectomorphism of neighborhoods of the maxima (\cite[Lemma 5.11(2)]{KW25}) after possibly rescaling the symplectic forms of $M^1$ and $M^2$.
    \item If the maxima of $M^1$ and $M^2$ are of dimension $4$ and there is an equivariant homeomorphism $\mu_1^{-1}({\lambda-r})\to \mu_2^{-1}({\lambda-r})$, then there is $\delta>0$ such that $\mu_1^{-1}([\lambda-2\delta,\lambda])$ and $\mu_2^{-1}([\lambda-2\delta,\lambda])$ are equivariantly homeomorphic (\cite[Lemma 5.6]{KW25}).
\end{enumerate}
\end{lemma}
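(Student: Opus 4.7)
The plan is to derive each of the three parts from the corresponding result in \cite{KW25}, after first verifying that the cited arguments carry over with pseudo momentum maps in place of actual momentum maps. By \Cref{rem:momentummap}, those arguments depend only on structural properties (connected extremal level sets, invariance, Morse-Bott with even indices, critical set equal to fixed-point set, orientable critical components) that pseudo momentum maps share. Before invoking anything, I would record the local picture near the maximum $F$. Since $F$ is simply-connected, it is either a two-sphere or a simply-connected closed four-manifold. By the equivariant local normal form and semi-freeness, a neighborhood of $F$ is $S^1$-equivariantly a disk bundle inside a complex vector bundle $N\to F$ on which $S^1$ acts fiberwise with all weights equal to $-1$; so $N$ has complex rank $2$ when $\dim F=2$ (splitting as $L_1\oplus L_2$ for complex line bundles $L_i\to S^2$), and complex rank $1$ when $\dim F=4$.

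For part (1), I would identify a neighborhood of the top of $M^i_{\lambda-r}$ with a $\CP^1$-bundle over $F\cong S^2$, topologically a Hirzebruch surface whose degree is encoded in the Euler class of the principal $S^1$-bundle $\mu_i^{-1}(\lambda-r)\to M^i_{\lambda-r}$. Part (1) then follows from \cite[Lemma 5.11(1)]{KW25}, which gives connectedness of the space of orientation-preserving homeomorphisms of the reduced spaces that preserve the Euler class. For part (2), given an equivariant homeomorphism of level sets, descending to reduced spaces and applying (1) pins down the normal-bundle data near $F$ up to topological equivalence. I would then combine the equivariant local normal form for semi-free Hamiltonian $S^1$-actions with Weinstein's tubular neighbourhood theorem to produce an equivariant symplectomorphism between tubular neighbourhoods of the maxima, rescaling so that the symplectic sizes of the normal disk bundles agree.

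For part (3), where $\dim F=4$, the reduced space $M^i_{\lambda-r}$ coincides with $F$ near the top, and $\mu_i^{-1}(\lambda-r)$ near the top is the principal $S^1$-bundle of the normal line bundle. An equivariant homeomorphism of level sets descends to a homeomorphism of the two maxima; extending radially over the disk bundles and using simple-connectedness of $F$ to patch fiber identifications (essentially the argument of \cite[Lemma 5.6]{KW25}) gives the claim. The main obstacle is part (1): connectedness of the relevant homeomorphism space requires both a global description of $M^i_{\lambda-r}$ and fine control over its mapping class group modulo the Euler-class constraint. Parts (2) and (3) are essentially local and follow more directly from the normal form together with standard tubular neighbourhood arguments.
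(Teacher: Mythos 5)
Your proposal is correct and matches the paper's treatment: the paper gives no independent proof of this lemma, presenting it explicitly as a restatement of \cite[Lemma 5.11(1), Lemma 5.11(2), Lemma 5.6]{KW25} for the current setting, which is exactly what you do (your added checks that pseudo momentum maps cause no trouble are harmless, and indeed near the extremum a pseudo momentum map literally coincides with an honest momentum map by definition).
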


Before we prove \Cref{thm:extendingnonsymplecticandsymplectic}, we apply \Cref{lem:neighborhoodsmaxima} to show that the second bullet in \Cref{thm:mainresult} is implied by a seemingly weaker condition.

\begin{lemma}\label{lem:isomorphismneighborhoods}
Let $M^1$ and $M^2$ be compact, connected, semi-free Hamiltonian $S^1$-manifolds of dimension six with minimal value $0$.
    If there is an orientation-preserving, w.r.t.\ the symplectic orientation, homeomorphism (symplectomorphism) solely between the minima of $M^1$ and $M^2$ that is covered by an isomorphism between their equivariant normal bundles, then there is an equivariant homeomorphism (symplectomorphism) between closed neighborhoods $\mu_i^{-1}([0,\delta])$ for some $\delta>0$.
\end{lemma}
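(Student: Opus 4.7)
The plan is to invoke the equivariant tubular neighborhood theorem (in the topological case) or the equivariant symplectic local normal form of Marle and Guillemin--Sternberg (in the symplectic case) to identify an $S^1$-invariant neighborhood of the minimum $F_{\min}^i\subset M^i$ with a neighborhood of the zero section in the equivariant normal bundle $N^i\to F_{\min}^i$. The supplied equivariant (symplectic) bundle isomorphism $\phi\colon N^1\to N^2$ covering $\eta\colon F_{\min}^1\to F_{\min}^2$ then transports one model to the other, and a careful choice of invariant Hermitian metric on $N^2$ makes the resulting composite respect the momentum-level structure, so that the same $\delta$ works on both sides.

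First I would produce, for each $i$, an $S^1$-invariant open neighborhood $V^i\supset F_{\min}^i$ and an equivariant diffeomorphism (respectively, symplectomorphism) $\Psi^i\colon V^i\to \tilde{V}^i$ onto an invariant neighborhood of the zero section of $N^i$. In the symplectic case, $N^i$ carries a canonical equivariant symplectic structure built from the symplectic form on $F_{\min}^i$, an invariant compatible Hermitian metric on $N^i$, and a unitary connection, and in these coordinates one has $\Psi^i_*\mu_i=\mu_i(F_{\min}^i)+\tfrac{1}{2}\|\cdot\|^2$ on the fibers. In the topological case the smooth equivariant tubular neighborhood exists as well, and an invariant Morse--Bott straightening, supported arbitrarily close to $F_{\min}^i$, brings $\Psi^i_*\mu_i$ into the same quadratic normal form; semi-freeness, which forces all normal weights at the minimum to equal $+1$, is what makes such coordinates available.

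Next I would fix an arbitrary invariant Hermitian metric on $N^1$ and take the metric on $N^2$ to be the pushforward $\phi_*\|\cdot\|_1$. This is legitimate since $\phi$ intertwines the $S^1$-actions (and, in the symplectic case, already preserves the fiberwise complex and symplectic structures). With these choices $\phi$ is fiberwise norm-preserving, so the composite
\[
F:=(\Psi^2)^{-1}\circ \phi\circ \Psi^1
\]
is an equivariant (symplectic) homeomorphism between invariant neighborhoods of $F_{\min}^1$ and $F_{\min}^2$ that intertwines $\mu_1$ and $\mu_2$ near $F_{\min}^1$. For any $\delta>0$ small enough that $\mu_i^{-1}([0,\delta])\subset V^i$, the restriction of $F$ is an orientation-preserving equivariant homeomorphism (symplectomorphism)
\[
F\colon \mu_1^{-1}([0,\delta])\longrightarrow \mu_2^{-1}([0,\delta]),
\]
with orientation preservation inherited from $\eta$ together with the complex orientation on the fibers of $\phi$, which combine to the symplectic orientation on $M^i$.

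The main obstacle is precisely the matching of the two sublevel sets for a common $\delta$: a priori the composite $F$ would only send $\mu_1^{-1}([0,\delta])$ into some neighborhood of $F_{\min}^2$ that is not of the required form. The matching-metric construction above resolves this. In the symplectic version this is clean because the Marle--Guillemin--Sternberg normal form produces, on the normal-bundle side, a momentum map equal on the nose to $\tfrac{1}{2}\|\cdot\|^2$; in the topological version the additional subtlety of passing to quadratic coordinates is handled by the equivariant Morse--Bott straightening mentioned above, which does not affect the ensuing argument since it is supported in an arbitrarily small invariant neighborhood of the minimum.
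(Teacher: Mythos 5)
Your proposal is correct, and in the symplectic version it is essentially the paper's proof: there the statement is deduced at once from the equivariant Weinstein/local normal form for the fixed minimum (the paper cites \cite{We71} and the disk-bundle model of \cite[5.3]{KW25}, whose momentum map is fiberwise quadratic), which is exactly your Marle--Guillemin--Sternberg argument with the metric, connection and fiberwise symplectic structure transported by $\phi$. In the topological version, however, you take a genuinely different route. The paper does a case analysis on $\dim F_{\min}\in\{0,2,4\}$: nothing to do for points; for sphere minima it first rescales $\omega_1$ so that the minima become symplectomorphic, invokes \cite[Lemma 5.11(2)]{KW25} to get an equivariant symplectomorphism of neighborhoods, and then corrects the resulting mismatch of momentum levels by the fiberwise rescaling $v\mapsto v/\sqrt{s}$; for four-dimensional minima it identifies the nearby reduced spaces with $F_i$, uses the normal-bundle isomorphism to match the circle bundles, and quotes \cite[Lemma 5.6]{KW25}. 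Your uniform ``normal form plus transported metric'' argument avoids both the case analysis and the rescaling tricks, and it even produces a homeomorphism intertwining $\mu_1$ and $\mu_2$, which is more than the lemma asks; what the paper's route buys is that it only reuses ready-made lemmas from \cite{KW25}. The one step you should spell out is the metric matching in the topological case: $\phi$ is then only a topological equivariant bundle isomorphism, so $\phi_*\|\cdot\|_1$ need not be compatible with the fiberwise symplectic structure of $N^2$, and neither the MGS normal form nor the equivariant Morse--Bott lemma directly yields a chart in which $\mu_2$ is quadratic with respect to this prescribed metric (Morse--Bott gives quadraticity with respect to the Hessian metric). You fix this by composing with an equivariant bundle automorphism of $N^2$ carrying the compatible (or Hessian) norm to $\phi_*\|\cdot\|_1$; such an automorphism exists by a fiberwise positive square-root argument, since both metrics are invariant and Hermitian for the complex structure induced by the weight-one action (equivariance of $\phi$ forces it to intertwine these complex structures). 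With that observation added, your argument is complete and secures the common $\delta$.
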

We need one additional ingredient, namely \cite[5.3]{KW25}, which relies ultimately on \cite{Th76} and \cite{We71}.
It says that for $M^1$ (similarly, $M^2$) there is a disk bundle $D$ over the minimum, on which $S^1$ acts fiberwise diagonally, with a certain invariant symplectic form $\omega$ such that $(D,\omega)$ is equivariantly symplectomorphic to $\mu_1^{-1}([0,\delta])$ for some $\delta>0$. Moreover, the symplectic form $\omega$ may be chosen in such a way that it restricts to the standard symplectic form on each disk fiber.\\
From that, we gain two immediate statements that we need in the proof of \Cref{lem:isomorphismneighborhoods}.
\begin{itemize}
    \item[(i)] Similar to \cref{eq:s2}, fiber multiplication with a value $s\in (0,1]$ sends the momentum map level $t$, $t\in [0,\delta]$, into the level $s^2t$.
    \item[(ii)] If the minima are of dimension $4$, then $\mu_1^{-1}(t)$ is a principal $S^1$-bundle over the minimum $F_1$ and $\mu_1^{-1}(t)/S^1=M^1_{t}$ is homeomorphic to $F_1$; under that homeomorphism, the Euler class of the normal bundle of $F_1$ is mapped to the Euler class of the $S^1$-bundle over $M^1_t$.
\end{itemize}
Now to \Cref{lem:isomorphismneighborhoods}:
\begin{proof}
     In the symplectic version, this is an immediate consequence of Weinstein's tubular neighborhood theorem, see \cite{We71} (or \cite[Theorem 5.4]{KW25} for a concrete statement), so we can assume that we are in the non-symplectic setting.
     \begin{itemize}
    \item If the minima are points, there is nothing to show.
    \item If the minima are spheres, we can rescale the whole symplectic form on $M^1$, for example, to a form $\omega_1'=s\cdot \omega_1$, $s>1$, with momentum map $\mu_1'$, so that now the minima are symplectomorphic. This gives us an equivariant symplectomorphism $g$ between neighborhoods $(\mu_1')^{-1}([0,\delta])$, $\mu_2^{-1}([0,\delta])$ of the minima of $(M^1,\omega_1')$ and $(M^2,\omega_2)$ by (2) of \Cref{lem:neighborhoodsmaxima}, and we may model these neighborhoods to be the total space $(D,\omega)$ of a disk bundle over a sphere. Since $(\mu_1')^{-1}([0,\delta])=\mu_1^{-1}([0,\delta/s])$, $g$ is an equivariant diffeomorphism
    $$ \mu_1^{-1}([0,\delta/s])\to \mu_2^{-1}([0,\delta]),$$
    but not yet a $\mu-S^1$-diffeomorphism. However, postcomposing $g$ with the fiberwise rescaling
    $$\mu_2^{-1}([0,\delta])\to \mu_2^{-1}([0,\delta/s]), \quad v\to v/\sqrt{s}$$
    yields the desired $\mu-S^1$-diffeomorphism by the item (i) above.
    \item If the minima are of dimension $4$, then for each $\delta>0$ sufficiently small, we have that $M^1_{\delta}$ is homeomorphic to $M^2_{\delta}$, because they are homeomorphic to $F_1$ and $F_2$, respectively by item (ii) above, and $F_1$ and $F_2$ are homeomorphic by assumption. Since $F_1$ and $F_2$ have isomorphic normal bundles by assumption, the $S^1$-bundles over $M^1_{\delta}$ and $M^2_{\delta}$ are also isomorphic (again, by item (ii) above); hence we obtain an equivariant homeomorphism $\mu_1^{-1}(t)\to \mu_2^{-1}(t)$ and thus the desired isomorphism of neighborhoods by \Cref{lem:neighborhoodsmaxima}.
\end{itemize}
\end{proof}

\begin{proof}[Proof of \Cref{thm:extendingnonsymplecticandsymplectic} in case $\lambda$ is maximal]
    There are three cases: the fixed point sets $F_1$ and $F_2$ at $\lambda$ are of dimension $0$, $2$ or $4$. In all cases, we show that there is $\delta >0$ such that both $\mu_i^{-1}([\lambda-2\delta,\lambda])$ are $\mu-S^1$-diffeomorphic to the same local model. We may as well assume that $\lambda-2\delta<\lambda-r$, since if it is not, we can use the normalized gradient flow to extend $f\colon \mu_1^{-1}((-\infty,\lambda-r])\to \mu_2^{-1}((-\infty,\lambda-r])$ to an equivariant homeomorphism
    \[
    f\colon \mu_1^{-1}((-\infty,\lambda-\delta])\to \mu_2^{-1}((-\infty,\lambda-\delta]).
    \]
    \begin{itemize}
        \item If $F_1$ and $F_2$  are of dimension $0$, they are both isolated fixed points at which the $S^1$-isotropy representation has weights $-1,-1,-1$. Hence there is $\delta>0$ such that $\mu_i^{-1}([\lambda-2\delta,\lambda])$ is even equivariantly symplectomorphic to a closed disk $D$ in $\C_{-1}\oplus \C_{-1} \oplus \C_{-1}$ with the standard symplectic form and a momentum map $\mu$ such that $0$ is mapped to $\lambda$. So, in order to extend $f$ to an equivariant homeomorphism $M^1\to M^2$, we only have to extend $f\colon D\supset \mu^{-1}(\lambda-\delta)\to \mu^{-1}(\lambda-\delta) \subset D$ to an equivariant homeomorphism $f\colon \mu^{-1}([\lambda-\delta,\lambda])\to \mu^{-1}([\lambda-\delta,\lambda])$. Again, using the normalized gradient flow of $\mu$, we identify $\mu^{-1}([\lambda-\delta,\lambda))\cong \mu^{-1}({\lambda-\delta})\times [\lambda-\delta,\lambda)$ and define $f\colon \mu^{-1}([\lambda-\delta,\lambda])\to \mu^{-1}([\lambda-\delta,\lambda])$ as
        \[
        f(x)=
        \begin{cases}
            (f^{\lambda-\delta}(p),t), \quad &
            \text{if } x=(p,t)\in \mu^{-1}({\lambda-\delta})\times [\lambda-\delta,\lambda)\\
            x \quad & \text{if } x\in \mu^{-1}({\lambda})=\{x\}.
        \end{cases}
        \]
        \item If $F_1$ and $F_2$ are of dimension $2$, we know by item (2) of \Cref{lem:neighborhoodsmaxima} that neighborhoods of the maxima are equivariantly symplectomorphic after rescaling the symplectic forms, which implies that neighborhoods $\mu_i^{-1}([\lambda-2\delta,\lambda])$ are $\mu-S^1$-diffeomorphic. Therefore, extending $f$ is again the same as extending $f$ considered as an equivariant homeomorphism $f\colon \mu_1^{-1}(\lambda-2\delta)\to \mu_1^{-1}(\lambda-2\delta)$ to an equivariant homeomorphism $\mu_1^{-1}([\lambda-2\delta,\lambda])\to \mu_1^{-1}([\lambda-2\delta,\lambda])$. This is possible because $f\colon \mu_1^{-1}(\lambda-2\delta)\to \mu_1^{-1}(\lambda-2\delta)$ is isotopic through equivariant homeomorphisms to the identity by item (1) of \Cref{lem:neighborhoodsmaxima}.
        \item If $F_1$ and $F_2$ are of dimension $4$, we know by item (3) of \Cref{lem:neighborhoodsmaxima} that neighborhoods $\mu_i^{-1}([\lambda-2\delta,\lambda])$ of the maxima are equivariantly homeomorphic (necessarily to a disk bundle over $F_i$ on which the $S^1$-action acts fiberwise, if $\delta$ is small enough). Then, extending $f$ is again the same as extending $f\colon \mu_1^{-1}(\lambda-2\delta)\to \mu_1^{-1}(\lambda-2\delta)$ to an equivariant homeomorphism $\mu_1^{-1}([\lambda-2\delta,\lambda])\to \mu_1^{-1}([\lambda-2\delta,\lambda])$, which is easily done using scalar multiplication on the $D^2$-fiber.
    \end{itemize}
\end{proof}

\bibliographystyle{amsalpha}

\end{document}